\def\rrr{}
\def\bbb{}
\def\rrr{}
\def\bbb{}
\def\mychange#1{#1}
\def\mchange#1{#1}
\theoremstyle{plain}
\newtheorem{thm}{Theorem}[section]
\newtheorem{lem}[thm]{Lemma}
\newtheorem{prop}[thm]{Proposition}
\newtheorem{cor}[thm]{Corollary}
\newtheorem{exa}[thm]{Example}
\DeclareMathOperator{\Ker}{Ker}
\DeclareMathOperator{\End}{End}
\DeclareMathOperator{\Homo}{H}
\DeclareMathOperator{\DR}{DR}
\newcommand{\R}{\mathbb{R}}
\newcommand{\D}{\mathbb{D}}
\newcommand{\C}{\mathbb{C}}
\newcommand{\DD}{\mathcal{D}}
\newcommand{\A}{\mathbb{A}}
\newcommand{\Q}{\mathbb{Q}}
\newcommand{\Z}{\mathbb{Z}}
\newcommand{\Gm}{\mathbb{G}_m}
\newcommand{\ii}{\sqrt{-1}}
\newcommand{\ve}{\varepsilon}
\def\pd#1{\partial_{#1}}
\begin{document}

\title{An algorithm of computing cohomology intersection number of hypergeometric integrals}

\author{Saiei-Jaeyeong Matsubara-Heo\footnote{Department of Mathematics, Graduate School of Science, Kobe  University, 1-1 Rokkodai, Nada-ku, Kobe 657-8501, Japan.\newline e-mail: \texttt{saiei@math.kobe-u.ac.jp}}\and 
Nobuki Takayama\footnote{Department of Mathematics, Graduate School of Science, Kobe  University, Japan.\newline e-mail: \texttt{takayama@math.kobe-u.ac.jp}}
}
\date{}

\maketitle

\begin{abstract}
We show that the cohomology intersection number of a twisted Gauss-Manin connection  with regularization condition is a rational function. As an application, we obtain a new quadratic relation associated to period integrals of a certain family of K3 surfaces.
\newline {\it Keywords---twisted cohomology intersection numbers, GKZ hypergeometric
systems, Quadratic relations, Gr\"obner basis}
\end{abstract}

\section{Introduction}
The study of intersection numbers of twisted cohomology groups
and twisted period relations for hypergeometric functions
started with the celebrated work by
Cho and Matsumoto \cite{CM}.
They clarified that the cohomology intersection number appears naturally as a part of the quadratic relation, a class of functional \mchange{identities} of hypergeometric functions.
They also developed a systematic method of computing the cohomology intersection number for $1$-dimensional integrals.
Since this work, several methods have been proposed to evaluate
intersection numbers of twisted cohomology groups,
see, e.g., \cite{Aomoto-Kita}, \cite{Goto-Matsumoto},
\cite{Goto-Matsumoto-2018}, \cite{KY}, \cite{Matsumoto-osaka}, \cite{Ohara-Sugiki-Takayama}
and references therein.
All methods utilize comparison theorems of twisted cohomology groups and residue calculus.

We propose a new method in this paper.
Our method reduces the problem of evaluating the intersection numbers
to the question of finding a rational solution of a system
of linear differential equations.
The key idea of this method is that we regard
the intersection matrix of 
a twisted cohomology group as a horizontal section 
of the tensor product of a connection $E$ and its dual.
For the proof, we assume an important condition: the regularization condition. With the aid of this condition, we can replace \mchange{transcendental objects} such as cohomology groups with compact support by algebraic de Rham cohomology groups.

In the beginning of \S 2, we briefly define the intersection matrix and derive the twisted
period relation in a general framework. The twisted period relation for hypergeometric functions was first given by Cho and Matsumoto \cite{CM}. Calculations of intersection numbers given by Matsumoto \cite{Matsumoto-osaka} will be very helpful to understand what  intersection numbers are and to study hypergeometric functions associated to hyperplane arrangements. A more comprehensive version of a definition of  intersection  numbers and the twisted period relation is given in \cite[\S 3, \S 4]{Ohara-Sugiki-Takayama}. We recommend readers to refer to these papers.

When the twisted cohomology group is associated to the GKZ system (\cite{GKZEuler}) for a matrix $A$ admitting a \mchange{regular} unimodular triangulation,
our method gives a complete algorithm to determine
the intersection matrix with the aid of algorithms of finding
rational solutions of a system of differential equations (see, e.g., \cite{Oaku-Takayama-Tsai} and its references)
and the formula of intersection numbers of twisted homology groups
for GKZ hypergeometric systems \cite{MH}.
For an introductory exposition of the intersection numbers of the twisted homology groups, see the book by Aomoto and Kita \cite[\S 2.3]{Aomoto-Kita} or \cite{KY}.
Our method is demonstrated for the matrix $A$ which appears in a study
of a $K3$ surface \cite{NS01} in the last section.
We note that the computation of the cohomology intersection number of this example has not been obtained from the previous approaches.
Another important advantage of our method is that the validity of the formula of cohomology intersection numbers can be checked by computer algebra systems without any help of an expert. While this paper was under review, we obtained
an algorithm to construct the Pfaffian systems with respect to a given
cohomology basis (\cite{ICMS}). 
We implemented the construction method
and the algorithm of this paper as a Risa/Asir package ``{\tt mt\_gkz.rr}''
(\cite{MTGKZ}).

We give a summary of contents. In \S 2, we state and prove the main theorem of this paper (\cref{thm:main}). In \S 3, we recall the basic set up of twisted cohomology theory associated to GKZ systems and describe our algorithm of computing the cohomology intersection numbers. We also mention a relation between the secondary fan and the common denominator of the cohomology intersection matrix. In \S 4, we demonstrate how our method works for a particular GKZ system which arises in a study of $K3$ surfaces.

The first author is supported by JSPS KAKENHI Grant Number 19K14554, and the second author is supported by JSPS KAKENHI Grant Number 17K05279. Both of the authors are supported by JST CREST Grant Number JP19209317. The first author thanks Masatoshi Noumi for valuable comments. Corollary 2.2 is an outcome of the comments. He would also like to thank Frits Beukers for letting him know the paper \cite{BJ}, where the authors discuss duality relations of univariate hypergeoemtric systems from a similar point of view. We deeply appreciate several comments of the reviewer for improving some explanations and for clarifying main points of our paper.

\section{Statement and the proof}

Let $X$, $Y$ be complex smooth quasi-projective varieties, and $f:X\rightarrow Y$ be an affine morphism. We assume $f$ is generically smooth in order to apply Thom-Mather's 1st isotopy lemma. We put $d=\dim X -\dim Y$. In the following, we use the notation of \cite{Hotta-Takeuchi-Tanisaki}. For any bounded complex of $\DD_X$-modules $L$, we set $\int_fL=\R f_*(\DD_{Y\leftarrow X}\otimes_{f^{-1}\DD_X} L)$, where $\DD_{Y\leftarrow X}$ is the transfer module. We also put $\D L=\R\mathcal{H}om_{\DD_X}(L,\DD_X)\otimes_{\mathcal{O}_X}\Omega_X^{\otimes -1}$. With this aid, we set $\int_{f!}L=\D\circ\int_f\circ\D L$. Let $M=(E,\nabla)$ be a regular integrable connection on $X$, and we put $N=\int_f^0 M=\Homo^0 \int_f M$. By the general theory \mchange{of holonomic $\mathcal{D}$-modules}, we see that $\int_f M$ is a complex of $D$-modules with regular holonomic cohomologies and therefore, $N$ is a regular connection defined on a non-empty Zariski open subset $U$ of $Y$.  By shrinking $U$ if necessary, we may assume that $f:f^{-1}(U)\rightarrow U$ is smooth. Now let us assume further the following non-trivial condition: the canonical morphism
\begin{equation}\label{Regularization}
\int_{f!} M\rightarrow\int_f M,
\end{equation}
is an isomorphism, or equivalently, the canonical morphism
\begin{equation}
\D\int_{f}M\rightarrow\int_f\D M,
\end{equation}
is an isomorphism. Here, $\mathbb{D}$ stands for the holonomic dual.
This condition is called the regularization condition. This name comes from the literature of hypergeometric functions (\cite[Theorem 3.1]{Aomoto-Kita}). Since $M$ is a connection, we see that $\D M$ is isomorphic to the dual connection $(E^\vee,\nabla^\vee)$ (\cite{Hotta-Takeuchi-Tanisaki} Example 2.6.10).

Considering the Spencer resolution, we see that $\int_f M$ is represented on $U$ by $\R f_*(\DR_{X/Y}(M))$ where $\DR_{X/Y}(M)=(\Omega^{\bullet+d}_{X/Y}(E),\nabla_{X/Y})$ is the relative de Rham complex (\cite{DMSS} 1.4. Proposition). Again by shrinking $U$, we may assume that $N$ is free on $U$, and that $f^{an}:f^{-1}(U)^{an}\rightarrow U^{an}$ is a fiber bundle by Thom-Mather's 1st isotopy lemma (\cite[(4.14) Th\'eor\`eme]{Verdier}, \cite[Cor. 1.2.14]{Tibar}) in view of \cite[(2.2) Th\'eor\`eme and (3.3) Th\'eor\`eme]{Verdier}. We take a free basis $\{\phi_j\}_{j=1}^r\subset \Homo^0(U,N)=\Homo^0(\Gamma(f^{-1}(U),\DR_{X/Y}( M)))$ (resp. $\{\psi_j\}_{j=1}^r\subset \Homo^0(U,\D N)$) on $U$. The connection $\nabla^{GM}:N\rightarrow \Omega^1(N)$ (resp. $\nabla^{\vee GM}:\D N\rightarrow \Omega^1(\D N)$) with respect to this basis $\{\phi_j\}_{j=1}^r$ (resp. $\{\psi_j\}_{j=1}^r$) is given by $\nabla^{GM}=d+\Omega$ (resp. $\nabla^{\vee GM}=d+\Omega^\vee$) for some $r\times r$ matrix $\Omega=(\omega_{ij})_{i,j=1}^r$ (resp. $\Omega^\vee=(\omega_{ij}^\vee)_{i,j=1}^r$) with values in 1-forms. Note that we have $\nabla^{GM}\phi_i=\sum_{j=1}^r\omega_{ji}\wedge\phi_j$ (resp. $\nabla^{\vee GM}\psi_i=\sum_{j=1}^r\omega_{ji}^\vee\wedge\psi_j$). Applying the solution functor to (\ref{Regularization}), applying the commutativity between direct images and solution functors (\cite{Hotta-Takeuchi-Tanisaki} Theorem 7.1.1), and taking a stalk at $y\in U$, we obtain a sequence of isomorphisms
\begin{equation}\label{CohomologyComparison}
\Homo^d(f^{-1}(y)^{an};\mathcal{L}\restriction_{f^{-1}(y)})\simeq(R^df^{an}_*\mathcal{L})_y\simeq (R^df^{an}_!\mathcal{L})_y\simeq\Homo^d_c(f^{-1}(y)^{an};\mathcal{L}\restriction_{f^{-1}(y)}) ,
\end{equation}
where $\mathcal{L}$ is the dual local system of the local system of flat sections $(E^{an})^{\nabla^{an}}$ and $\Homo_c$ stands for the cohomology \mchange{group} with compact support (\cite{Aomoto-Kita} \S 2.2). The first isomorphism of (\ref{CohomologyComparison}) is a result of Lemma \ref{LemmaA} in the appendix. By regularity, we also have the comparison isomorphism of Deligne-Gr\"othendieck (\cite[Corollaire 6.3]{Deligne})
\begin{equation}\label{DeligneGrothendieck}
\Homo^0(f^{-1}(y);\DR_{X/Y}(\D M))\simeq\Homo^d(f^{-1}(y)^{an};\mathcal{L}\restriction_{f^{-1}(y)}).
\end{equation}
Taking the Poincar\'e dual of (\ref{CohomologyComparison}), we obtain a comparison isomorphism of homology groups

\begin{equation}\label{HomologyComparison}
\Homo_d(f^{-1}(y)^{an}; \mathcal{L}\restriction_{f^{-1}(y)})\overset{\sim}{\rightarrow}\Homo_d^{lf}(f^{-1}(y)^{an};\mathcal{L}\restriction_{f^{-1}(y)}).
\end{equation}
Here, we denote by $\Homo^{lf}$ the locally finite (or Borel-Moore) homology group (\cite{Aomoto-Kita} \S 2.2).

Now we define the cohomology intersection matrix. Following \cite{CM}, we will denote it by $I_{ch}$. We denote by $\mathcal{L}^\vee$ the dual local system of $\mathcal{L}$. The cohomology intersection pairing is the perfect pairing between $\Homo^d_c(f^{-1}(y)^{an}; \mathcal{L}\restriction_{f^{-1}(y)})$ and $\Homo^d(f^{-1}(y)^{an}; \mathcal{L}^\vee\restriction_{f^{-1}(y)})$ defined by $\Homo^d(f^{-1}(y)^{an}; \mathcal{L}^\vee\restriction_{f^{-1}(y)})\times \Homo^d_c(f^{-1}(y)^{an}; \mathcal{L}\restriction_{f^{-1}(y)})\ni ([\omega],[\eta])\mapsto \langle[\omega],[\eta]\rangle_{ch}=\int_{f^{-1}(y)}\omega\wedge\eta\in\C$ for a fixed $y$. Note that we take the standard resolution of $\mathcal{L}\restriction_{f^{-1}(y)}$ and $\mathcal{L}^\vee\restriction_{f^{-1}(y)}$ by means of a twisted de Rham complex by regarding $f^{-1}(y)$ as a $2d$-dimensional smooth manifold (\cite[\S 2.2]{Aomoto-Kita}). \mchange{By abuse of notation, we often write $\langle\omega,\eta\rangle_{ch}$ instead of $\langle[\omega],[\eta]\rangle_{ch}$.} In view of isomorphisms (\ref{CohomologyComparison}) and (\ref{DeligneGrothendieck}), we can define the cohomology intersection matrix $I_{ch}=(\langle \phi_i, \psi_j\rangle_{ch})_{i,j}$ which is non-degenerate at each $y\in U$. Since $f^{an}:f^{-1}(U)^{an}\rightarrow U^{an}$ is a fiber bundle, we can take a free basis $\gamma_j\in R^d f^{an}_!(\mathcal{L})$ and $\gamma^\vee_j\in R^d f^{an}_!(\mathcal{L}^\vee)$ on a neighbourhood $W$ of each $y\in U^{an}$. We can define the homology intersection pairing as the perfect pairing $\Homo_d\left(f^{-1}(y)^{an}; \mathcal{L}\restriction_{f^{-1}(y)}\right)\times\Homo^{lf}_d\left(f^{-1}(y)^{an}; \mathcal{L}^\vee\restriction_{f^{-1}(y)}\right)\ni (\gamma,\gamma^\vee)\mapsto \langle\gamma,\gamma^\vee\rangle_{h}\in\C$ which is defined as the Poincar\'e dual of the cohomology intersection pairing (\cite{KY}). By local trivialization, we may assume that the homology intersection matrix $I_h=(\langle \gamma_i,\gamma_j^\vee\rangle_h)_{i,j}$ is constant on $W$, and $I_h$ is non-degenerate in view of (\ref{HomologyComparison}). Now let us put $P=\left(\int_{\gamma_j}\phi_i\right)_{i,j}$ and $P^\vee=\left(\int_{\gamma^\vee_j}\psi_i\right)_{i,j}$. In view of   (\ref{CohomologyComparison}), (\ref{DeligneGrothendieck}) and (\ref{HomologyComparison}), the twisted period relation (\cite{CM} Theorem 2) is 
\begin{equation}\label{TPR}
I_h={}^tP {}^tI_{ch}^{-1}P^\vee.
\end{equation}

Here, ${}^tI_{ch}$ is the transposed matrix of $I_{ch}$. By the definition of the connection matrix, we have two equalities

\begin{equation}
d P={}^t\Omega P,\;\;\;\; d P^\vee={}^t\Omega^\vee P^\vee.
\end{equation}
We put $I=I_{ch}$ and $J={}^tI^{-1}$. Since $I_h$ is locally constant, by differentiating (\ref{TPR}), we obtain the identity

\begin{equation}\label{SecondaryEq2}
dJ+\Omega J+J{}^t\Omega^\vee =0.
\end{equation}
Here, we have used the fact that both $P$ and $P^\vee$ are non-degenerate by the perfectness of the period pairings (\cite{Aomoto-Kita} Lemma 2.5). Taking into account the equality $dI^{-1}=-I^{-1}(dI)I^{-1}$, we have
\begin{equation}\label{SecondaryEq}
dI={}^t\Omega I+I\Omega^\vee.
\end{equation}
We call (\ref{SecondaryEq}) the secondary equation. Note that $\Omega$ and $\Omega^\vee$ are matrices with entries in regular 1-forms on $U$. We also remark that the cohomology intersection matrix $I_{ch}$ is not necessarily a constant matrix for a general choice of the bases $\{ \phi_i\}$ and $\{\psi_i\}$ (see formula (\ref{eqn:4.4}) of this paper). The following theorem is the main result of this paper.

\begin{thm}\label{thm:main}
Suppose that the regularization condition (\ref{Regularization}) is satisfied and $N$ is irreducible. Then, the secondary equation (\ref{SecondaryEq}) is a regular connection, i.e., any analytic solution of (\ref{SecondaryEq}) has at most polynomial growth along any singularity. Moreover, any rational solution of (\ref{SecondaryEq}) is, up to constant multiplication, equal to $I_{ch}$.
\end{thm}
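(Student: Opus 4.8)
The plan is to interpret the secondary equation \eqref{SecondaryEq} as the Gauss-Manin connection on the tensor product $N\otimes\D N$ (more precisely, on $\mathcal{H}om$ from one to the other), and then to exploit irreducibility of $N$ to pin down its rational solutions. First I would observe that, by construction, $\Omega$ and $\Omega^\vee$ are the connection matrices of the regular connections $N=\int_f^0 M$ and $\D N=\int_f^0 \D M$ (the latter using that $\D M\simeq(E^\vee,\nabla^\vee)$ and the regularization condition, which identifies $\int_{f!}M$ with $\int_f M$). Hence the operator $J\mapsto dJ+\Omega J+J\,{}^t\Omega^\vee$ is precisely the Gauss-Manin connection $\nabla^{GM}\otimes\nabla^{\vee GM}$ written in the bases $\{\phi_i\}$, $\{\psi_j\}$; equivalently \eqref{SecondaryEq} is the connection on $\mathcal{H}om_{\mathcal{O}_U}(N^{\vee},\D N)$ or, after the transposition bookkeeping, on $N^{\vee}\otimes(\D N)^{\vee}$. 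Since $N$ is a regular connection and the dual and tensor product of regular connections are regular, the connection underlying \eqref{SecondaryEq} is regular; by the Deligne-Grothendieck description of regularity this is exactly the statement that every analytic solution has at most polynomial growth along the singular locus. That disposes of the first assertion.

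For the second assertion I would argue as follows. A rational solution of \eqref{SecondaryEq} is a horizontal rational section of $\mathcal{H}om(N^{\vee},\D N)$, i.e. (after untwisting the transposes) a morphism of connections $\Phi\colon N^{\vee}\to \D N$ defined over the function field, hence a horizontal section on a possibly smaller Zariski-open set. The key input is that $I_{ch}$ itself is one such solution — this is the content of the secondary equation \eqref{SecondaryEq}, derived above from the twisted period relation \eqref{TPR} together with local constancy of $I_h$ and nondegeneracy of the period matrices — and that $I_{ch}$ is everywhere nondegenerate on $U$. So I would consider, for an arbitrary rational solution $I'$, the product $I_{ch}^{-1}I'$ (or the appropriate composition of the corresponding maps of connections); a short computation using that both $I_{ch}$ and $I'$ satisfy \eqref{SecondaryEq} shows that $I_{ch}^{-1}I'$ is horizontal for the trivial connection $d$ conjugated by... more precisely, differentiating $I_{ch}^{-1}I'$ and substituting \eqref{SecondaryEq} for both factors makes the $\Omega$ and $\Omega^\vee$ terms cancel, so $I_{ch}^{-1}I'$ defines a horizontal endomorphism of $N$ (as a connection). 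By irreducibility of $N$ and Schur's lemma for connections — the ring of endomorphisms of an irreducible connection over the function field is a division algebra, and over $\C$ (with $\C$ algebraically closed and the endomorphism algebra finite-dimensional, being contained in $\mathrm{End}$ of the generic stalk) it is just $\C$ — this endomorphism is a scalar. Therefore $I' = c\, I_{ch}$ for some constant $c$, as claimed.

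The main obstacle, and the step that needs the most care, is the bookkeeping that identifies \eqref{SecondaryEq} with the honest Gauss-Manin connection on a tensor/Hom bundle built from $N$ and $\D N$ — keeping straight which transposes and inverses occur, and checking that the regularization condition is exactly what is needed for $\Omega^\vee$ to be the connection matrix of $\int_f^0\D M$ rather than of some a priori different object. Once that identification is in place, regularity is formal (stability of the class of regular connections under duals and tensor products, plus Deligne-Grothendieck), and the uniqueness of the rational solution is the standard Schur-lemma argument for irreducible connections, the only subtlety being to phrase "horizontal rational section" correctly so that irreducibility applies — i.e. that a nonzero horizontal rational homomorphism between irreducible connections is an isomorphism and a horizontal rational endomorphism is a scalar. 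I would also remark that nondegeneracy of $I_{ch}$ on all of $U$ is what guarantees $c\neq 0$ is not forced but rather that $I_{ch}$ genuinely spans the (one-dimensional) space of rational solutions.
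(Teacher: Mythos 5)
Your proposal is correct and follows essentially the same route as the paper: the paper's \cref{prop:Tensor} identifies (\ref{SecondaryEq}) with the tensor connection $\D N\otimes N\simeq\mathcal{E}nd(N)$, regularity then follows from stability of regular connections under duals and tensor products, and uniqueness is Schur's lemma (\cref{prop:Schur}). The only real difference is in how Schur is deployed: you compose an arbitrary rational solution with $I_{ch}^{-1}$ to obtain a horizontal single-valued endomorphism and conclude it is a scalar, whereas the paper shows directly that the monodromy-invariant flat sections of $\mathcal{E}nd(N)$ form a one-dimensional space and then invokes a meromorphy lemma (\cref{lem:Meromorphy}, i.e.\ Deligne's polynomial-growth estimate plus GAGA) to identify that space with the space of rational solutions. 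Your variant does prove the literal uniqueness statement, but be aware that it does not by itself show that $I_{ch}$ is a \emph{rational} solution — the existence half that the paper's abstract advertises — and \cref{lem:Meromorphy} is exactly the ingredient supplying that.
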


Note that the secondary equation (\ref{SecondaryEq}) can be rewritten in an invariant form: $d\langle \phi,\psi\rangle_{ch}=\langle \nabla^{GM}\phi,\psi\rangle_{ch}+\langle \phi,\nabla^{\vee GM}\psi\rangle_{ch}$. Therefore, we obtain the following corollary.

\begin{cor}
Under the assumption of \cref{thm:main}, let $B$ be an $\mathcal{O}_U$-bilinear form $B:N\otimes_{\mathcal{O}_U}\D N\rightarrow \mathcal{O}_U$ such that for any local sections $\phi$ of $N$ and $\psi$ of $\D N$, the equality
\begin{equation}\label{Compatibility}
dB(\phi,\psi)=B(\nabla^{GM}\phi,\psi)+B(\phi,\nabla^{\vee GM}\psi)
\end{equation}
holds. Then, $B$ is up to constant multiplication equal to the cohomology intersection pairing $\langle\bullet,\bullet\rangle_{ch}$.
\end{cor}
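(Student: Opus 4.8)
The plan is to reduce the statement about the bilinear form $B$ to the uniqueness part of \cref{thm:main}. First I would fix the free bases $\{\phi_j\}_{j=1}^r$ of $\Homo^0(U,N)$ and $\{\psi_j\}_{j=1}^r$ of $\Homo^0(U,\D N)$ that were chosen before the theorem, and define the matrix $I_B=(B(\phi_i,\psi_j))_{i,j}$ with entries in $\mathcal{O}_U$, i.e.\ rational functions on $Y$. The hypothesis \eqref{Compatibility}, specialized to the basis elements and expanded via $\nabla^{GM}\phi_i=\sum_j\omega_{ji}\wedge\phi_j$ and $\nabla^{\vee GM}\psi_i=\sum_j\omega_{ji}^\vee\wedge\psi_j$, translates into exactly the matrix identity $dI_B={}^t\Omega\,I_B+I_B\,\Omega^\vee$, which is the secondary equation \eqref{SecondaryEq}. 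Thus $I_B$ is a rational solution of \eqref{SecondaryEq}.

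Next I would invoke the uniqueness conclusion of \cref{thm:main}: since the regularization condition holds and $N$ is irreducible, any rational solution of \eqref{SecondaryEq} is a constant multiple of $I_{ch}=(\langle\phi_i,\psi_j\rangle_{ch})_{i,j}$. Hence $I_B=c\,I_{ch}$ for some $c\in\C$. Unwinding the definition of $I_B$ on the chosen bases gives $B(\phi_i,\psi_j)=c\,\langle\phi_i,\psi_j\rangle_{ch}$ for all $i,j$; since $\{\phi_i\}$ and $\{\psi_j\}$ are $\mathcal{O}_U$-bases of $N$ and $\D N$ and both $B$ and $\langle\bullet,\bullet\rangle_{ch}$ are $\mathcal{O}_U$-bilinear, it follows that $B=c\,\langle\bullet,\bullet\rangle_{ch}$ as bilinear forms $N\otimes_{\mathcal{O}_U}\D N\rightarrow\mathcal{O}_U$. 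This is the assertion of the corollary.

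The only genuinely non-routine point is checking that \eqref{Compatibility} really does become \eqref{SecondaryEq} on the nose, with the transposes in the right places: one must be careful that $d\langle\phi_i,\psi_j\rangle_{ch}=\langle\nabla^{GM}\phi_i,\psi_j\rangle_{ch}+\langle\phi_i,\nabla^{\vee GM}\psi_j\rangle_{ch}$ — which is precisely the invariant rewriting of \eqref{SecondaryEq} noted just before the corollary — and the analogous expansion for $B$ produce the same matrix equation, so that $I_B$ and $I_{ch}$ solve one and the same system. Everything else is a direct translation between the coordinate-free statement and its matrix form, plus citation of \cref{thm:main}; there is no real obstacle beyond this bookkeeping, which is why the corollary follows immediately.
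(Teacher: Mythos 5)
Your proposal is correct and is exactly the argument the paper intends: the paper gives no separate proof of the corollary, deriving it immediately from the invariant rewriting $d\langle\phi,\psi\rangle_{ch}=\langle\nabla^{GM}\phi,\psi\rangle_{ch}+\langle\phi,\nabla^{\vee GM}\psi\rangle_{ch}$ of the secondary equation together with the uniqueness part of \cref{thm:main}, which is precisely what you spell out via the matrix $I_B$. Your careful check that the transposes land in the right places ($\langle\nabla^{GM}\phi_i,\psi_j\rangle_{ch}=({}^t\Omega I)_{ij}$, $\langle\phi_i,\nabla^{\vee GM}\psi_j\rangle_{ch}=(I\Omega^\vee)_{ij}$) is the only bookkeeping the paper leaves implicit.
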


By abuse of notation, let us denote by $U$ a complex smooth quasi-projective variety and by $(E,\nabla_E)$ a connection on $U$. 
In the sequel, we denote this connection simply by $E$ if no confusion arises.
We set
$\mathcal{E}nd(E) = {\cal H}om_{{\cal O}}(E,E)$ and the associated
connection on ${\cal E}nd(E)$ is denoted by $\nabla$.
The endomorphism sheaf ${\cal E}nd(E)$ is again a connection.
In other words, it is a ${\cal D}$-module, is locally free, and of 
finite rank as an ${\cal O}$-module.
See, e.g., \cite[Chapter 5]{Hotta-Takeuchi-Tanisaki}, where one can find 
some fundamental properties of this connection. We recall that analytic continuations of flat sections $(E^{an})^{\nabla_E}$ naturally give rise to a representation of the fundamental group $\pi_1(U^{an},\mathring{x})$ for a base point $\mathring{x}\in U$. We call this representation the monodromy representation. 

\begin{prop}\label{prop:Schur}
Suppose the monodromy representation of $(E,\nabla_E)$ is irreducible. Let $\mathring{x}\in U$ be a point. We put
\begin{equation}
S=\{\varphi\in ({\cal E}nd(E)^{an})^\nabla_{\mathring{x}}\mid \varphi\text{ is monodromy invariant}\}.
\end{equation}
Then, $\dim_{\C} S=1.$
\end{prop}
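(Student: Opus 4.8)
The plan is to reduce the statement to the classical Schur lemma for irreducible representations. First I would observe that taking flat sections (analytic continuation along paths from the base point $\mathring{x}$) sets up an equivalence between connections on $U^{an}$ and finite-dimensional representations of $\pi_1(U^{an},\mathring{x})$; under this equivalence $(E,\nabla_E)$ corresponds to the monodromy representation $\rho\colon\pi_1(U^{an},\mathring{x})\to GL(V)$ on $V=(E^{an})^{\nabla_E}_{\mathring{x}}$, which is irreducible by hypothesis. The endomorphism connection $\mathcal{E}nd(E)={\cal H}om_{{\cal O}}(E,E)$ corresponds, compatibly with the flat structures, to the representation $\End(V)$ with the conjugation action $g\cdot\varphi = \rho(g)\,\varphi\,\rho(g)^{-1}$; the key point here is that a section of $\mathcal{E}nd(E)$ is flat for $\nabla$ exactly when the corresponding endomorphism of $V$ is equivariant for this action, i.e.\ commutes with $\rho(g)$ for every $g$. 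This is a standard computation with the product connection, and I would only state it rather than carry it out.

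Next, with this dictionary in hand, the set $S$ of monodromy-invariant flat sections of $\mathcal{E}nd(E)^{an}$ at $\mathring{x}$ is identified with
\begin{equation}
S \simeq \{\varphi\in\End_{\C}(V)\mid \varphi\circ\rho(g)=\rho(g)\circ\varphi\text{ for all }g\in\pi_1(U^{an},\mathring{x})\} = \End_{\pi_1}(V),
\end{equation}
the space of $\pi_1$-equivariant endomorphisms of the irreducible representation $V$. By Schur's lemma over the algebraically closed field $\C$, any such $\varphi$ is a scalar multiple of the identity, so $\dim_{\C}\End_{\pi_1}(V)=1$, which gives $\dim_{\C} S = 1$.

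The only genuine content beyond bookkeeping is checking that ``monodromy invariant flat section of $\mathcal{E}nd(E)$'' really does translate to ``$\pi_1$-equivariant, i.e.\ commuting, endomorphism of $V$'': one must verify both that flatness corresponds to the endomorphism being globally defined as a single element of $\End(V)$ (as opposed to a locally varying family) and that the extra monodromy-invariance hypothesis is precisely the commutation condition. I expect this compatibility of the flat structure on $\mathcal{E}nd(E)$ with the conjugation action on $\End(V)$ to be the main (minor) obstacle; once it is in place, the conclusion is immediate from Schur. Finally, note this is exactly the input needed for \cref{thm:main}: the secondary equation \eqref{SecondaryEq} is, in invariant form, the flatness equation for the bilinear form $B$, so its rational solutions are flat sections of ${\cal H}om(N\otimes\D N,\mathcal{O}_U)$, and irreducibility of $N$ together with \cref{prop:Schur} forces the solution space to be one-dimensional, spanned by $I_{ch}$.
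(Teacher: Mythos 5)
Your proposal is correct and follows essentially the same route as the paper's proof: identify $S$ with $\End_{\pi_1(U^{an},\mathring{x})}(E_{\mathring{x}})$ via the compatibility of the flat structure on ${\cal E}nd(E)$ with the conjugation action (which the paper verifies explicitly from $\langle\nabla\varphi,s\rangle=\nabla_E\langle\varphi,s\rangle-\langle\varphi,\nabla_E s\rangle$ and the continuation formula $\pi_{\End}(g)\varphi=\pi_E(g)\circ\varphi\circ\pi_E(g^{-1})$), then apply Schur's lemma over $\C$. The step you defer is exactly the short computation the paper carries out, so there is no gap.
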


\begin{proof}
Remember that the connection on ${\cal E}nd(E)$ is given by
\begin{equation}
\langle \nabla\varphi,s\rangle=\nabla_E(\langle\varphi,s\rangle)-\langle\varphi,\nabla_Es\rangle
\end{equation}
for any $\varphi\in{\cal E}nd(E)$ and $s\in E$. Suppose $\varphi\in({\cal E}nd(E)^{an})^\nabla.$ For any $s\in (E^{an})^{\nabla_E}$, we have
\begin{equation}
0=\langle \nabla\varphi,s\rangle=\nabla_E(\langle \varphi,s\rangle).
\end{equation}
We denote by $\pi_E(g)$ (resp. $\pi_{\End}(g)$) the analytic continuation of flat sections of $E$ (resp. ${\cal E}nd(E)$) along a loop $g\in\pi_1(U^{an},\mathring{x})$. We have
\begin{equation}
\pi_E(g)(\langle \varphi,s\rangle)=\langle \pi_{\End}(g)\varphi,\pi_E(g)s\rangle\mchange{,}
\end{equation}
\mchange{and therefore} 
\begin{equation}
\pi_{\End}(g)\varphi=\pi_E(g)\circ\varphi\circ\pi_E(g^{-1}).
\end{equation}
From this, we can see that $\pi_{\End}(g)\varphi=\varphi$ is equivalent to $\pi_E(g)\circ\varphi=\varphi\circ\pi_E(g)$. Therefore, we have an identity
\begin{equation}
S=\End_{\pi_1(U^{an},\mathring{x})}(E_{\mathring{x}}).
\end{equation}
The assertion follows from Schur's lemma.
\end{proof}

Next, we recall the trivialisation formula for a tensor connection. Let $(F,\nabla_F)$ be another connection on $U$. Suppose that $(E,\nabla_E)$ and $(F,\nabla_F)$ are trivialized with respect to frames $({\bf e}_1,\dots,{\bf e}_r)$ and $({\bf f}_1,\dots,{\bf f}_{r^\prime})$. The connections are trivialized as $\nabla_E=d+A\wedge$ and $\nabla_F=d+B\wedge$, where $A=(\omega^i_j)$ and $B=(\tilde{\omega}^i_j)$ are square matrices with entries in $1$ forms. Then, for any section $\sum_{\substack{1\leq i\leq r\\1\leq j\leq r^\prime}}\alpha^{ij}{\bf e}_i\otimes{\bf f}_j$ of $E\otimes F := E \otimes_{{\cal O}_U} F$, we have

\begin{align}
\nabla_{E\otimes F}\left(\sum_{\substack{1\leq i\leq r\\1\leq j\leq r^\prime}}\alpha^{ij}{\bf e}_i\otimes{\bf f}_j\right)&=\sum_{i,j}d\alpha^{i,j}{\bf e}_i\otimes{\bf f}_j+\sum_{j}({\bf e}_1,\dots,{\bf e}_r)A
\left(
\begin{array}{ccc}
\alpha^{1j}\\
\vdots\\
\alpha^{rj}
\end{array}
\right)\otimes{\bf f}_j\nonumber
\\
&+
\sum_{i}{\bf e}_i\otimes
({\bf f}_1,\dots,{\bf f}_{r^\prime})
B
\left(
\begin{array}{ccc}
\alpha^{i1}\\
\vdots\\
\alpha^{ir^\prime}
\end{array}
\right)
\\
 &=\sum_{i,j}d\alpha^{i,j}{\bf e}_i\otimes{\bf f}_j+\sum_{i,j}(\sum_{k=1}^r\omega^i_k\alpha^{kj}){\bf e}_i\otimes{\bf f}_j+\sum_{i,j}(\sum_{l=1}^{r^\prime}\tilde{\omega}^j_l\alpha^{il}){\bf e}_i\otimes{\bf f}_j.
\end{align}

Therefore, if we trivialize the tensor product $E\otimes F$ with respect to the frame $\{ {\bf e}_i\otimes{\bf f}_j\}$, the connection is given by $\nabla_{E\otimes F}=d+A\bullet+\bullet{}^tB$. Now we can show that the secondary equation is actually a tensor connection. 

\begin{prop}\label{prop:Tensor}
The secondary equation (\ref{SecondaryEq}) coincides with the tensor connection $\D N\otimes N$.
\end{prop}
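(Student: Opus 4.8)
The plan is to unwind the definitions so that the two connections can be compared on the same frame. First I would recall from the setup that $N$ is trivialized by the basis $\{\phi_j\}_{j=1}^r$ with connection matrix $\nabla^{GM}=d+\Omega$, and $\D N$ is trivialized by $\{\psi_j\}_{j=1}^r$ with connection matrix $\nabla^{\vee GM}=d+\Omega^\vee$. The cohomology intersection matrix $I=I_{ch}=(\langle\phi_i,\psi_j\rangle_{ch})_{i,j}$ is a section of $\mathcal{H}om_{\mathcal{O}_U}(\D N, N^\vee)$, equivalently of $N^\vee\otimes_{\mathcal{O}_U}(\D N)^\vee\cong N^\vee\otimes N$; but it is cleaner to work with the pairing itself, a section $B_{ch}$ of $(N\otimes_{\mathcal{O}_U}\D N)^\vee$. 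The subtlety to address is the identification of $(N\otimes\D N)^\vee$ with $\D N\otimes N$ via the bases dual to $\{\phi_i\}$ and $\{\psi_j\}$, and a careful bookkeeping of transposes, since the theorem statement phrases the target as ``the tensor connection $\D N\otimes N$'' rather than its dual.

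Next I would invoke the trivialisation formula just derived: for a tensor product $E\otimes F$ trivialized with respect to $\{\mathbf{e}_i\otimes\mathbf{f}_j\}$, the connection is $\nabla_{E\otimes F}=d+A\bullet+\bullet\,{}^tB$, where $A$ and $B$ are the connection matrices of $E$ and $F$ respectively. Applying this with $E=N$ (so $A=\Omega$) and $F=\D N$ (so $B=\Omega^\vee$), I would write out the flat-section equation for a matrix-valued function $C=(\alpha^{ij})$ representing a section of $N\otimes\D N$: it reads $dC+\Omega C+C\,{}^t\Omega^\vee=0$. Passing to the dual connection on $(N\otimes\D N)^\vee$ — or equivalently reading the secondary equation \eqref{SecondaryEq}, $dI={}^t\Omega I+I\Omega^\vee$, and comparing with \eqref{SecondaryEq2}, $dJ+\Omega J+J\,{}^t\Omega^\vee=0$ where $J={}^tI^{-1}$ — I would observe that $J$ is precisely a flat section of the tensor connection $N\otimes\D N$ in the frame $\{\phi_i\otimes\psi_j\}$, up to the transposition of indices that swaps the two factors. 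Thus the secondary equation for $I$ is, after this transpose-and-inverse identification, exactly the defining equation of horizontal sections of $\D N\otimes N$ (equivalently $N\otimes\D N$), which is what the proposition claims.

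The main obstacle, and the only place where real care is needed, is keeping the index conventions consistent: the paper's connection acts on $\phi_i$ by $\nabla^{GM}\phi_i=\sum_j\omega_{ji}\wedge\phi_j$, i.e.\ with the transpose of $\Omega$, so one must be vigilant about whether a given matrix equation is written in terms of $\Omega$ or ${}^t\Omega$, and likewise for $\Omega^\vee$. The cleanest route is probably to verify the claim in the invariant (frame-free) form already noted after \cref{thm:main}, namely $d\langle\phi,\psi\rangle_{ch}=\langle\nabla^{GM}\phi,\psi\rangle_{ch}+\langle\phi,\nabla^{\vee GM}\psi\rangle_{ch}$: this is manifestly the Leibniz rule for the pairing $N\otimes_{\mathcal{O}_U}\D N\to\mathcal{O}_U$ to be a horizontal (flat) morphism of connections, hence $I_{ch}$ corresponds to a flat — in particular horizontal — section of $\mathcal{H}om(N\otimes\D N,\mathcal{O}_U)=(N\otimes\D N)^\vee$, and one then matches $(N\otimes\D N)^\vee$ with $\D N\otimes N$ using that $N^\vee\cong\D N$ canonically (the duality noted in the excerpt for regular connections). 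I would close by remarking that once this identification is in place, \eqref{SecondaryEq} is literally the trivialized connection matrix equation for $\D N\otimes N$ in the frame $\{\psi_i\otimes\phi_j\}$, so ``the secondary equation coincides with the tensor connection $\D N\otimes N$'' holds on the nose.
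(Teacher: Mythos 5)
Your overall strategy is the paper's: reduce to the trivialisation formula $\nabla_{E\otimes F}=d+A\bullet+\bullet\,{}^tB$ and identify the secondary equation as the flat-section equation of a tensor connection in a suitable frame. The invariant reformulation via the Leibniz rule $d\langle\phi,\psi\rangle_{ch}=\langle\nabla^{GM}\phi,\psi\rangle_{ch}+\langle\phi,\nabla^{\vee GM}\psi\rangle_{ch}$ is also fine and is exactly what the paper records after \cref{thm:main}. However, your closing identification is wrong on the decisive point: (\ref{SecondaryEq}) is \emph{not} the trivialized connection of $\D N\otimes N$ in the frame $\{\psi_i\otimes\phi_j\}$. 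In that frame the factors carry the matrices $\Omega^\vee$ and $\Omega$, so the tensor formula gives $d+\Omega^\vee\bullet+\bullet\,{}^t\Omega$ and the flat-section equation $dC+\Omega^\vee C+C\,{}^t\Omega=0$, which is not (\ref{SecondaryEq}). The paper instead trivializes $\D N$ by the frame \emph{dual} to $\{\phi_j\}$ (connection matrix $-{}^t\Omega$) and $N$ by the frame dual to $\{\psi_j\}$ (connection matrix $-{}^t\Omega^\vee$); the tensor formula then yields $d-{}^t\Omega\bullet-\bullet\,\Omega^\vee$, whose flat-section equation is literally $dI={}^t\Omega I+I\Omega^\vee$. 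Since the whole content of the proposition is which connection matrix one gets, you need to name these dual frames explicitly; the resulting connection in your frame $\{\psi_i\otimes\phi_j\}$ is only gauge-equivalent to the one you want.

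A second, smaller issue: your first route passes from $I$ to $J={}^tI^{-1}$ and calls this a ``transpose-and-inverse identification'' of the two equations. Matrix inversion is not $\mathcal{O}_U$-linear, so it cannot identify (\ref{SecondaryEq}) with the connection $N\otimes\D N$ as connections; it only shows that nondegenerate flat sections of one correspond to nondegenerate flat sections of the other. That is enough for some purposes but is not the statement of the proposition. The transpose alone (without the inverse), applied to the dual-frame trivialization, is what actually matches (\ref{SecondaryEq}) with (\ref{SecondaryEq2})'s dual; stick to the second, invariant route and finish it with the correct dual frames.
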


\begin{proof}
Recall that the connection of $N$ is given by $d+\Omega$ with respect to the basis $\phi_j$ and that of $\D N$ is given by $d+\Omega^\vee$ with respect to the basis $\psi_j$. Thus, their dual connections with respect to the dual frames are given by $d-{}^t\Omega$ (connection for $\D N$) and $d-{}^t\Omega^\vee$ (connection for $N$) respectively. Therefore, the connection $ \D N\otimes N$ with respect to these frames is given by
$d-{}^t\Omega\bullet-\bullet\Omega^\vee$. This is nothing but the secondary equation (\ref{SecondaryEq}). 
\end{proof}

\begin{lem}\label{lem:Meromorphy}
If $(E,\nabla_E)$ is a regular connection, and if $s\in \Gamma(U^{an},(E^{an})^{\nabla_E})$ is monodromy invariant, we have $s\in\Gamma(U,E)$, i.e.\ , $s$ is an algebraic section.
\end{lem}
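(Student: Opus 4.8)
The plan is to reduce the statement to the classical fact that a regular holonomic connection admits a canonical meromorphic (Deligne) extension across its singularities, and that a flat section with trivial monodromy extends to a global algebraic section. First I would choose a smooth projective compactification $\overline{U}$ of $U$ such that $D=\overline{U}\setminus U$ is a normal crossing divisor; after replacing $(E,\nabla_E)$ by its Deligne canonical extension $(\overline{E},\overline{\nabla}_E)$ with logarithmic poles along $D$ and eigenvalues of the residues in a fixed fundamental domain, we may work on $\overline{U}$. The hypothesis that $\nabla_E$ is regular is precisely what guarantees that this extension exists and that horizontal sections of $E^{an}$ grow at worst polynomially along $D$; this is the input from Deligne's theory (\cite[Corollaire 6.3]{Deligne}, cf. the use of regularity already invoked for (\ref{DeligneGrothendieck})).

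Next I would argue locally near a point of $D$. Fix a polydisc $\Delta^n$ with coordinates in which $D=\{t_1\cdots t_k=0\}$, and let $s$ be the given flat section of $E^{an}$ on $\Delta^n\setminus D$. Since $s$ is monodromy invariant, its analytic continuation around each small loop encircling a component of $D$ returns to $s$; hence $s$ is single-valued on $\Delta^n\setminus D$. Writing $s$ in terms of a frame of $\overline{E}$ adapted to the logarithmic connection, flatness $\overline{\nabla}_E s=0$ together with the regular (logarithmic) form of the connection matrix forces the coefficient functions of $s$ to be holomorphic on $\Delta^n\setminus D$ with at worst moderate growth; being single-valued and of moderate growth, they extend meromorphically across $D$, and then flatness upgrades this to holomorphicity (the usual argument: a moderate-growth single-valued solution of a regular system near a puncture is in fact holomorphic, as its Laurent coefficients must vanish by the indicial equation unless $0$ is an eigenvalue of the residue, and even then the nilpotent/monodromy-invariance analysis closes the gap). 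Thus $s$ extends to a holomorphic — indeed flat — section of $\overline{E}$ on all of $\overline{U}^{an}$.

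Finally I would globalize and algebraize. A holomorphic section of the coherent sheaf $\overline{E}$ over the compact $\overline{U}^{an}$ is, by GAGA, an algebraic section of $\overline{E}$; its restriction to $U$ is then an algebraic section of $E$, which is the assertion $s\in\Gamma(U,E)$. The main obstacle is the second step: controlling the behavior of $s$ at the boundary divisor and promoting "single-valued of moderate growth" to "holomorphic." This is where regularity is used essentially (an irregular connection can have single-valued flat sections with essential singularities, e.g. $e^{1/t}$), and where one must handle the case where $0$ lies in the eigenvalue spectrum of the residue of $\overline{\nabla}_E$ along a component of $D$ — there the monodromy-invariance hypothesis, rather than a pure growth estimate, is what guarantees that $s$ lands in the logarithmic part with no genuine pole. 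Everything else (choice of compactification, Deligne extension, GAGA) is standard and can be cited.
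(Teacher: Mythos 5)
Your proposal is correct and follows essentially the same route as the paper: compactify $U$ with a normal crossing boundary divisor $D$, use Deligne's regularity theorem to get moderate growth of the flat section along $D$, use monodromy invariance to get single-valuedness and hence a meromorphic extension across $D$, and conclude by GAGA. The only difference is that you work harder than necessary in the middle step -- upgrading to a \emph{holomorphic} section of the Deligne canonical extension -- whereas the paper only needs $s$ to lie in $\Gamma(X,\mathcal{O}_E(mD))$ for some $m$, since the conclusion concerns sections over $U$ only and poles along $D$ are harmless.
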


\begin{proof}
Take a projective compactification $X$ of $U$ so that $D=X\setminus U$ is a normal crossing divisor. Since $s$ has at most polynomial growth along $D$ (\cite{Deligne} Th\'eor\`eme 4.1), we see that there is a positive integer $m$ such that $s\in\Gamma(X^{an},\mathcal{O}_E^{an}(mD))=\Gamma(X,\mathcal{O}_E(mD))$. Here, the equality is a consequence of GAGA (\cite{Serre}).
\end{proof}

\noindent
({\rm Proof of \cref{thm:main}}) \mchange{Recall} that $N$ is a regular connection, hence so is $\D N$. Therefore the tensor connection $\D N\otimes N$ is also regular. By \cref{prop:Tensor}, we can conclude that (\ref{SecondaryEq}) is regular. The first part of the  statement is verified.

In view of \cref{prop:Schur}, \cref{lem:Meromorphy}, and the fact that $\D N\otimes N$ is isomorphic to ${\cal E}nd(N)$, we see that the rational solutions of (\ref{SecondaryEq}) are one dimensional. Taking into account that $I_{ch}$ is monodromy invariant, the second part of the statement is confirmed.

\section{Euler integral representations and twisted period relations}
In this section, we discuss general results on Euler integral representations. Consider $k$ Laurent polynomials
$h_{l,z^{(l)}}(x)=\displaystyle\sum_{j=1}^{N_l}z_j^{(l)}x^{{\bf a}^{(l)}(j)}\;\;(l=1,\dots,k),$
where each coefficient $z_j^{(l)}$ is regarded as a variable and $x=(x_1,\dots,x_n)$. For any parameters $\gamma_l\in\C\;\;(l=1,\dots k)$ and $c\in\C^{n},$ an integral 
\begin{equation}\label{EulerInt}
f_{\Gamma}(z)=\int_\Gamma h_{1,z^{(1)}}(x)^{-\gamma_1}\cdots h_{k,z^{(k)}}(x)^{-\gamma_k}x^{c} \frac{dx}{x}
\end{equation}
is called Euler integral. In the formula above, we put $\frac{dx}{x}=\frac{dx_1}{x_1}\wedge\dots\wedge\frac{dx_n}{x_n}.$ Here, $\Gamma$ is a suitable element of the twisted homology group associated to the multivalued function 
\begin{equation}\label{Phi}
\Phi=h_{1,z^{(1)}}(x)^{-\gamma_1}\cdots h_{k,z^{(k)}}(x)^{-\gamma_k}x^{c}.
\end{equation}
Let us clarify the meaning of this choice. We define an algebraic connection on a trivial bundle over $(\Gm)^n\setminus\{ x\in(\Gm)^n\mid h_{1,z^{(1)}}(x)\cdots h_{k,z^{(k)}}(x)=0\}$ by 
\begin{equation} 
\nabla_x=d_x-\sum_{l=1}^k\gamma_l\frac{d_xh_{l,z^{(l)}}(x)}{h_{l,z^{(l)}}(x)}\wedge+\sum_{i=1}^nc_i\frac{dx_i}{x_i}\wedge.
\end{equation}
Here, $d_x$ is the exterior derivative in $x$-variables. Formally, the action of the connection $\nabla_x$ on any function $f$ is given by the formula $\nabla_xf=\Phi^{-1}\times(d_x(\Phi \times f)).$ We denote by $\C\Phi$ the dual local system of the local system of flat sections of the analytification $\nabla^{an}_x$. Then, our integration cycle $\Gamma$ belongs to the twisted homology group
\begin{equation}
\Homo_n\left( (\C^\times)^n\setminus\{ x\in(\C^\times)^n\mid h_{1,z^{(1)}}(x)\cdots h_{k,z^{(k)}}(x)=0\}; \C \Phi\right).
\end{equation}

To control the Euler integral $f_{\Gamma}$, we use a GKZ system via the Cayley trick. We put $A_l=({\bf a}^{(l)}(1)|\dots|{\bf a}^{(l)}(N_l))$, $N=N_1+\dots+N_k$ and define an $(n+k)\times N$ matrix $A$ by

\begin{equation}\label{CayleyConfigu}
A
=
\left(
\begin{array}{ccc|ccc|c|ccc}
1&\cdots&1&0&\cdots&0&\cdots&0&\cdots&0\\
\hline
0&\cdots&0&1&\cdots&1&\cdots&0&\cdots&0\\
\hline
 &\vdots& & &\vdots& &\ddots& &\vdots& \\
\hline
0&\cdots&0&0&\cdots&0&\cdots&1&\cdots&1\\
\hline
 &A_1& & &A_2& &\cdots & &A_k& 
\end{array}
\right).
\end{equation}

\noindent
We put $\delta=\begin{pmatrix}\gamma\\ c\end{pmatrix}$ and denote by $M_A(\delta)$ the GKZ system (\cite{GKZEuler}). Note that our parameters $\gamma$ and $c$ correspond to $-\alpha$ and $\beta$ in \cite{GKZEuler}. The following proposition is well-known.
\begin{prop}[\cite{GKZEuler}]
The integral (\ref{EulerInt}) is a solution of $M_A(\delta).$
\end{prop}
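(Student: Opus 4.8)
The plan is to reduce the statement to the classical fact that an Euler-type integral solves the GKZ system $M_A(\delta)$ for the Cayley configuration $A$, by carefully tracking what the Cayley trick does to the multivalued integrand $\Phi$. First I would recall that the GKZ system $M_A(\delta)$ is generated over the Weyl algebra in variables $(w_{l,j})_{1\le l\le k,\,1\le j\le N_l}$ by the toric (box) operators $\square_u = \prod_{u_i>0}\partial_{w_i}^{u_i} - \prod_{u_i<0}\partial_{w_i}^{-u_i}$ for $u\in\Ker_{\Z}A$, together with the Euler operators $\sum_i (A)_{mi}\, w_i\partial_{w_i} - \delta_m$ for $m=1,\dots,n+k$. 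So the proof is just the verification that $f_\Gamma(z)$, viewed as a function of the full variable set $z=(z^{(l)}_j)$, is annihilated by both families.

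The key computational step is the Cayley substitution. Introduce auxiliary variables $t_1,\dots,t_k$ and set $y = (t_1,\dots,t_k,x_1,\dots,x_n)\in(\Gm)^{k+n}$; the columns of $A$ are exactly the exponent vectors ${\bf b}^{(l)}(j) = e_l + ({\bf 0},{\bf a}^{(l)}(j))$ of the single Laurent polynomial $g(y,z) = \sum_{l,j} z^{(l)}_j\, y^{{\bf b}^{(l)}(j)} = \sum_{l=1}^k t_l\, h_{l,z^{(l)}}(x)$. One then checks the identity
\begin{equation}\label{eqn:cayley-identity}
\int_{\Gamma'} g(y,z)^{-\gamma_0}\, y^{c'}\,\frac{dy}{y} = C\cdot \int_\Gamma h_{1,z^{(1)}}(x)^{-\gamma_1}\cdots h_{k,z^{(k)}}(x)^{-\gamma_k}\, x^c\,\frac{dx}{x},
\end{equation}
for an appropriate choice of the exponent $\gamma_0$, the vector $c' = (\gamma_1,\dots,\gamma_k,c)$ matching $\delta = \transp{}{(\gamma,c)}$, an appropriate cycle $\Gamma'$, and a nonzero constant $C$ depending only on the parameters (a product of Gamma factors coming from integrating out the $t_l$ along rays, using $\int_0^\infty t^{a-1}e^{-t}\,dt$-type or Euler beta-type identities). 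The point is that the left side of \eqref{eqn:cayley-identity} is manifestly an Euler integral for the \emph{single} polynomial $g$ with exponent matrix $A$, hence a GKZ integral in the standard sense, so it satisfies $M_A(\delta)$; and $M_A(\delta)$ only sees the dependence on $z$, which is identical on both sides up to the constant $C$.

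Concretely I would verify the two families of operators directly on $f_\Gamma$, which makes the argument self-contained and avoids having to make \eqref{eqn:cayley-identity} fully rigorous at the level of cycles. For a box operator: if $u\in\Ker_\Z A$ then in particular $\sum_{j} u^{(l)}_j = 0$ for each $l$ (from the top $k$ rows of $A$) and $\sum_{l,j} u^{(l)}_j\,{\bf a}^{(l)}(j) = 0$ (from the bottom block); differentiating $\Phi$ in $z^{(l)}_j$ brings down a factor $-\gamma_l\, x^{{\bf a}^{(l)}(j)}/h_{l,z^{(l)}}$, and the two monomial products $\prod \partial^{u_+}$ and $\prod\partial^{u_-}$ produce the same overall power of $x$ and the same powers of the $h_l$ precisely because of those two kernel relations, so $\square_u \Phi = 0$ and hence $\square_u f_\Gamma = \int_\Gamma \square_u \Phi = 0$. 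For the Euler operators: $z^{(l)}_j\partial_{z^{(l)}_j}\Phi = -\gamma_l\, z^{(l)}_j x^{{\bf a}^{(l)}(j)}/h_{l,z^{(l)}}\cdot\Phi$, and summing the $m$-th row of $A$ against these gives, for $m\le k$, the factor $-\gamma_m\, h_{m,z^{(m)}}/h_{m,z^{(m)}} = -\gamma_m$, and for the last $n$ rows a factor $-\sum_l \gamma_l\, (d\log$-type sum$)$ which combines with the monomial $x^c$ and $\frac{dx}{x}$ so that the Euler operator acts as the scalar $\delta_m$; this is the standard homogeneity bookkeeping.

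The main obstacle is \emph{not} the algebra — the operator identities are routine — but making the homology/cycle matching in the Cayley trick precise: one must ensure that $\Gamma$ (an element of the twisted homology of the $h_l$-complement with coefficients in $\C\Phi$) corresponds to a genuine cycle $\Gamma'$ in the twisted homology for the single-polynomial configuration $A$, along which the integral \eqref{EulerInt} converges and represents a nonzero class, and that the constant $C$ is nonzero for generic parameters. For the stated proposition, however, it suffices to show $f_\Gamma$ is annihilated by all generators of $M_A(\delta)$, so I would present the direct operator verification above and relegate the cycle-level Cayley correspondence to a remark, citing \cite{GKZEuler} for the fact that such integrals, suitably interpreted, span the solution space.
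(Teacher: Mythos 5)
The paper does not prove this proposition at all: it is stated as ``well-known'' with a citation to \cite{GKZEuler}, so any actual argument you supply goes beyond what the paper records. Your direct operator verification is the standard proof and is essentially correct. The box-operator part is complete as written: for $u=u_+-u_-\in\Ker_{\Z}A$, the top $k$ rows of $A$ give $|u_+^{(l)}|=|u_-^{(l)}|$ for each $l$ (matching the falling-factorial prefactors in $\gamma_l$ and the powers of $h_{l,z^{(l)}}$) and the bottom $n$ rows match the $x$-monomials, so $\partial^{u_+}\Phi=\partial^{u_-}\Phi$ as functions and the integrand itself is annihilated. The one step you gloss over is in the Euler operators for the last $n$ rows. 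For $m\le k$ the integrand is again annihilated pointwise, but for the row indexed by $x_i$ the identity you actually obtain is
\[
\sum_{l,j}\bigl({\bf a}^{(l)}(j)\bigr)_i\, z_j^{(l)}\,\partial_{z_j^{(l)}}\Phi \;=\; x_i\partial_{x_i}\Phi \;-\; c_i\,\Phi ,
\]
and the term $x_i\partial_{x_i}\Phi\cdot\frac{dx}{x}=d_x\bigl(\Phi\,\iota_{x_i\partial_{x_i}}\frac{dx}{x}\bigr)$ does \emph{not} vanish as a function; it integrates to zero only because $\Gamma$ is a cycle in the twisted homology group (twisted Stokes). Your phrase ``standard homogeneity bookkeeping'' hides exactly this step, which is the only place where the hypothesis that $\Gamma$ is a twisted cycle enters; relatedly, differentiating under the integral sign requires $\Gamma$ to vary locally constantly in $z$, which is part of the paper's setup via the projection $\pi$. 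The Cayley-trick identity in your second paragraph is a correct alternative route but, as you yourself note, is not needed for the stated claim; since making the cycle correspondence $\Gamma\leftrightarrow\Gamma'$ precise is the hard part of that route, dropping it in favor of the direct verification (with the twisted Stokes step made explicit) is the right call.
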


\noindent
More generally, under a suitable condition on parameters $\delta$, one can prove that any solution of $M_A(\delta)$ has an Euler integral representation (\cite{GKZEuler}). 

Let $\pi:X=(\Gm)_x^n\times \A^N_z\setminus\bigcup_{l=1}^k\{ (x,z)\mid h_{l,z^{(l)}}(x)=0\}\rightarrow \A^N_z=Y$ be \mchange{the} natural projection where subscripts stand for coordinates. We define an algebraic connection of \mchange{the} trivial bundle over $X$ by $\nabla=\Phi^{-1}\circ d_{(x,z)}\circ\Phi.$ We denote by $\Z A$ the sub-lattice of $\Z^{(n+k)\times 1}$ spanned by column vectors of $A$. If $\Z A=\Z^{(n+k)\times 1}$, $\delta$ is non-resonant in the sense of \cite{GKZEuler}, and $\gamma_l\notin\Z$ for any $l=1,\dots,k$, we have the regularization isomorphism $\int_{\pi!}(\mathcal{O}_X,\nabla)\overset{\sim}{\rightarrow}\int_{\pi}(\mathcal{O}_X,\nabla)$ as well as a canonical isomorphism $M_A(\delta)\simeq\int_{\pi}(\mathcal{O}_X,\nabla)$ by \cite{MH} Theorem 2.12. \mchange{Note that the regularization condition is in general violated if any of $\gamma_l$ is an integer (\cite[Remark 2.14]{MH}).} If we denote by $U$ the Newton non-degenerate locus of $M_A(\delta)$, $M_A(\delta)$ is a connection on $U$ (\cite{Adolphson} LEMMA 3.3).

Now let us fix a basis $\{\phi_i(z)\}_{i=1}^r\subset\int^0_{\pi}(\mathcal{O}_X,\nabla)\restriction_{U}$ and a basis $\{\psi_i(z)\}_{i=1}^r\subset\int^0_{\pi}(\mathcal{O}_X,\nabla^{\vee})\restriction_{U}$. By using the relative de Rham complex, we can explicitly compute the connection $\nabla^{GM}$ of $\int^0_{\pi}(\mathcal{O}_X,\nabla)\restriction_{U}$ via the formula 
\begin{equation}\label{GMAction}
\nabla^{GM}\phi=d_z\phi-\displaystyle\sum_{j,l}\gamma_l\frac{x^{{\bf a}^{(l)}(j)}}{h_{l,z^{(l)}}(x)}dz_j^{(l)}\wedge\phi
\end{equation}
for any $\phi\in\Homo^n\left( (\Gm)^n_x\times U;\left(\Omega^\bullet_{(\Gm)^n_x\times U/U}\left(*\cup_{l=1}^k\{ h_{l,z^{(l)}}(x)=0\}\right),\nabla_x\right)\right)=\int^0_{\pi}(\mathcal{O}_X,\nabla)\restriction_{U}$. We have a similar formula for the connection $\nabla^{\vee GM}$ of $\int^0_{\pi}(\mathcal{O}_X,\nabla^{\vee})\restriction_{U}$. These bases as well as the connection matrices $\nabla^{GM}=d_z+\Omega$ and $\nabla^{\vee GM}=d_z+\Omega^\vee$ can explicitly be constructed via Gr\"obner bases (\cite{HibiTakayamaNishiyama} Theorem 2). Note that there is an isomorphism induced by the correspondence  $M_A(\delta)\ni [1]\mapsto [\frac{dx}{x}]\in\int_{\pi}^0(\mathcal{O}_X,\nabla)$ (\cite[Corollary 3.4]{MH}).
When $A$ is a matrix of the form (\ref{CayleyConfigu}), the associated GKZ system is
regular holonomic \cite{Hotta-Wuhan}.
In \cite{MH}, the intersection numbers for a basis of the twisted homology
group for the GKZ system associated to a matrix $A$ of the form (\ref{CayleyConfigu}) admitting
a unimodular regular triangulation
are determined
when parameters are generic (\cite{MH} Theorem 8.1).
The irreducibility of the GKZ system is proved under the non-resonance 
condition \cite{B2010}, \cite{GKZEuler}, \cite{SW2010}.
Note that irreducibility of a regular holonomic $\mathcal{D}$-module is equivalent to that of the corresponding perverse sheaf by Riemann-Hilbert correspondence (\cite{Hotta-Takeuchi-Tanisaki} Theorem 7.2.5).
Algorithms for finding
rational solutions have been studied by several approaches 
(see, e.g., \cite{Oaku-Takayama-Tsai} and its references).
These results together with our Theorem \ref{thm:main} yield the following
theorem.

\begin{thm}  \label{th:algorithm}
Given 
a matrix $A=(a_{ij})$ as in (\ref{CayleyConfigu})
admitting a unimodular regular triangulation $T$.
When parameters are non-resonant, $\gamma_l\notin\Z$ and moreover the set of series solutions
\mchange{with respect to} $T$ is linearly independent, 
the intersection matrix of
the twisted cohomology group of the GKZ system associated to the matrix $A$
can be algorithmically determined.
\end{thm}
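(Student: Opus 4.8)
The plan is to assemble \cref{th:algorithm} from the three ingredients that the preceding paragraph isolates: (i) \cref{thm:main}, which reduces the computation of $I_{ch}$ to finding a rational solution of the secondary equation (\ref{SecondaryEq}); (ii) the fact that for $A$ of the form (\ref{CayleyConfigu}) admitting a unimodular regular triangulation $T$, the twisted homology intersection numbers $I_h$ and a basis of the twisted homology group are known explicitly (\cite{MH} Theorem 8.1), and moreover a cohomology basis $\{\phi_i\}$, $\{\psi_i\}$ together with the Pfaffian matrices $\Omega$, $\Omega^\vee$ can be written down via Gr\"obner basis methods (\cite{HibiTakayamaNishiyama} Theorem 2, together with (\ref{GMAction})); and (iii) the existence of algorithms that, given an explicit integrable connection in Pfaffian form over a rational function field, decide and compute its space of rational solutions (\cite{Oaku-Takayama-Tsai} and references therein). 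So the output of the algorithm is: form the secondary equation from $\Omega$ and $\Omega^\vee$, run a rational-solutions algorithm on it, and normalise the resulting one-dimensional space using $I_h$ and the period matrices $P$, $P^\vee$ via (\ref{TPR}).

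The key steps, in order, would be as follows. First I would check that the hypotheses of \cref{thm:main} are met: under the non-resonance condition together with $\gamma_l\notin\Z$ for all $l$, the regularization isomorphism $\int_{\pi!}(\mathcal{O}_X,\nabla)\simeq\int_\pi(\mathcal{O}_X,\nabla)$ holds by \cite{MH} Theorem 2.12, so the regularization condition (\ref{Regularization}) is satisfied; and non-resonance forces irreducibility of $M_A(\delta)=N$ by \cite{B2010}, \cite{GKZEuler}, \cite{SW2010} (equivalently of the corresponding perverse sheaf, \cite{Hotta-Takeuchi-Tanisaki} Theorem 7.2.5). Hence \cref{thm:main} applies: the secondary equation is a regular connection on $U$ and its rational solutions form a one-dimensional space spanned by $I_{ch}$. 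Second, I would note that all the data needed to \emph{write down} (\ref{SecondaryEq}) are algorithmically available: the cohomology basis coming from $M_A(\delta)\ni[1]\mapsto[\frac{dx}{x}]$ (\cite{MH} Corollary 3.4) and the Pfaffian matrices $\Omega$, $\Omega^\vee$ via (\ref{GMAction}) and \cite{HibiTakayamaNishiyama} Theorem 2; then $dI={}^t\Omega I+I\Omega^\vee$ is an explicit linear Pfaffian system with coefficients in the function field of $U$. Third, apply the rational-solution algorithm to obtain the line of rational solutions; by \cref{thm:main} this line is $\C\cdot I_{ch}$, so it remains only to fix the scalar. For the normalisation I would use that $I_h$ is explicitly known (\cite{MH} Theorem 8.1) and, under the stated linear-independence hypothesis on the series solutions associated to $T$, the period matrices $P$, $P^\vee$ can be computed term-by-term to sufficient order; plugging into the twisted period relation (\ref{TPR}), $I_h={}^tP\,{}^tI_{ch}^{-1}P^\vee$, pins down the constant. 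This yields the intersection matrix.

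I expect the main obstacle to be purely bookkeeping about \emph{effectivity at each stage} rather than any new mathematics: one must be careful that the rational-solutions algorithms of \cite{Oaku-Takayama-Tsai}-type apply to the secondary equation as presented (it is a connection on the Zariski-open $U$, not on all of $\A^N_z$, so one works over the appropriate localisation), and that the normalisation step is genuinely effective — i.e.\ that finitely many Taylor/series coefficients of the period integrals suffice to detect the scalar, which is where the hypothesis that the series solutions with respect to $T$ are linearly independent enters (it guarantees that the $\Gamma$-basis from \cite{MH} pairs nondegenerately with the computed cohomology basis, so $P$ is invertible and its leading behaviour determines the constant). None of these steps requires proving new structural results; everything follows by combining the cited algorithms with \cref{thm:main}. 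I would therefore present the proof as a short orchestration: verify the hypotheses of \cref{thm:main}, invoke the three algorithmic inputs, and conclude that the composite procedure terminates and returns $I_{ch}$.
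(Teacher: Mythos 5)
Your proposal is correct and follows essentially the same route as the paper: the paper's "proof" is exactly the orchestration you describe — verify the hypotheses of \cref{thm:main} (regularization via \cite[Theorem 2.12]{MH}, irreducibility via non-resonance), construct $\Omega$, $\Omega^\vee$ by Gr\"obner bases, find the one-dimensional space of rational solutions of (\ref{SecondaryEq}) by \cite{Oaku-Takayama-Tsai}-type algorithms, and fix the scalar via the twisted period relation in the explicit form of \cite[Theorem 8.1]{MH}. Your remark on where the linear-independence hypothesis on the series solutions enters (making the normalisation step effective) matches the paper's intent.
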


We \mchange{rename $(z_j^{(l)})_{j,l}$ as $(z_i)_i$ and} denote $\frac{\partial}{\partial z_i}$ by $\pd{i}$.
The action of a differential operator to a function is denoted by $\bullet$.
In other words, $\pd{i}\bullet f$ means $\frac{\partial f}{\partial z_i}$.
Moreover, we denote by $\Omega_i$ the coefficient matrix of $\Omega$ with respect to the $1$-form $dz_i$.
The algorithm we propose is summarized as follows.
\begin{enumerate}
\item Obtain a Pfaffian system $\pd{i} + \Omega_i$
from the GKZ hypergeometric ideal \mychange{defining} $M_A(\delta)$ generated by
\begin{eqnarray*}
&&  \sum_j a_{ij} z_j \pd{j} +\delta_i, \quad i=1, \ldots, n+k \\
&&  \pd{}^u - \pd{}^v,  \quad Au=Av, u,v \in {\bf N}_0^N
\end{eqnarray*}
It is well-known that 
this step can be performed by a Gr\"obner basis computation 
in the ring of differtial operators with rational function coefficients.
See, e.g., \cite[6.2, 7.4.2]{dojo-en}.
\item Find a non-zero rational function solution $I$ of the secondary equation
$$ \pd{i}\bullet I - {}^t\Omega_i I - I \Omega_i^\vee =0,
\quad i=1, \ldots, N. $$
To be more precise, see, e.g.,  \cite{Oaku-Takayama-Tsai} and its references.
\item Determine the constant multiple of $I$ by the twisted period relation (\ref{TPR}). To be more precise, we use \cite[Theorem 8.1]{MH}.
\end{enumerate}
Let $G$ be the Gr\"obner basis of the GKZ hypergeometric ideal
obtained in step 1.
The set of the standard
monomials for $G$,
which is the set of monomials in $\pd{}$ that are not divisible
by \mychange{the initials of} the elements of $G$,
is of the form
$\{ \pd{}^s \,|\, s \in S \subset {\bf N}_0^N\}$  
(see, e.g., \cite[6.1]{dojo-en}). 
The basis of the twisted cohomology groups corresponding to $G$
is obtained by applying these $\pd{}^s$'s to 
the kernel function of (\ref{EulerInt}) 
and dividing it by (\ref{Phi}). 

\begin{exa}\label{example:gauss} \rm  
Let us determine algorithmically the intersection matrix of the twisted cohomology group
for a matrix standing for the hypergeometric function ${}_2F_1$,
which was studied by \cite{CM} and \cite{Matsumoto-osaka} 
with geometric methods.
We put
$$
A=\left(
\begin{array}{cc|cc}
1 & 1 & 0 & 0 \\ \hline
0 & 0 & 1 & 1 \\ \hline
0 & 1 & 0 & 1
\end{array}
\right)
, \quad
F=\left(\begin{array}{c} 1 \\ z_4 \partial_4 \end{array}\right).
$$
Here, $\{1, \pd{4}\}$ in the frame $F$ is the set of the standard monomials
of a Gr\"obner basis of \mychange{the left ideal defining} $M_A(\delta)$.
The bases of the twisted cohomology group corresponding to the frame $F$ is
$\left\{ \frac{dx}{x}, 
 z_4 \frac{\partial \log\mchange{\Phi}}{\partial z_4} \frac{dx}{x}=
 \frac{-\gamma_2 z_4 dx}{(z_3+z_4 x)x} \right\}\subset \int^0_{\pi}(\mathcal{O}_X,\nabla)\restriction_{U}$ and $\left\{ \frac{dx}{x}, 
 z_4 \frac{\partial \log \mchange{\Phi}^{-1}}{\partial z_4} \frac{dx}{x}=
 \frac{\gamma_2 z_4 dx}{(z_3+z_4 x)x} \right\}\subset \int^0_{\pi}(\mathcal{O}_X,\nabla^{\vee})\restriction_{U}$
where $\mchange{\Phi} = (z_1+z_2 x)^{-\gamma_1} (z_3 + z_4 x)^{-\gamma_2} x^c$.
\rrr
The expression 
$\mchange{\nabla^{GM}_{\pd{z_4}}} \left[ \frac{dx}{x} \right]
= \frac{-\gamma_2 z_4 dx}{(z_3+z_4 x)x}$
is obtained by the exchange of $\pd{z_4}$ and the integral sign as
\begin{eqnarray*}
& & \pd{z_4} \bullet \int_\Gamma (z_1+x z_2)^{-\gamma_1} (z_3 + x z_4)^{-\gamma_2} x^c
   \cdot \frac{dx}{x} \\
&=& \int_\Gamma (z_1+x z_2)^{-\gamma_1} (z_3 + x z_4)^{-\gamma_2} x^c \cdot
   \frac{-\gamma_2 z_4}{z_3+z_4 x}
   \frac{dx}{x} 
\end{eqnarray*}
and the perfectness of the twisted homology and cohomology groups.
The other expressions can be obtained analogously.
Generally speaking, through the connection $\nabla^{GM}$ in (\ref{GMAction}),
we can define an action of the Weyl algebra $D_N$ on
$\int_\pi^0 (\mathcal{O}_X,\nabla)$;
the action of $\pd{z_i}$ is given by $\nabla^{GM}_{\pd{z_i}}$.
When ${\cal S}$ is the set of the standard monomials
of a Gr\"obner basis of the left ideal defining $M_A(\delta)$,
$\left\{ s \bullet \left[ \left. \frac{dx}{x} \right] \,\right|\, s \in {\cal S} \right\}$
is a free basis of ${\cal O}_{U'}$-module $\int_\pi^0 (\mathcal{O}_X,\nabla)|_{U'}
\simeq M_A(\delta)|_{U'}$
where $U'$ is the complement of the singular locus of $\Omega_i$'s
in Theorem \ref{th:algorithm} associated to the Gr\"obner basis.

Now, we
\bbb
set $z_1=z_2=z_3=1$ because of two reasons.
The first reason is that the solutions of the GKZ system can be expressed as a composition
of rational functions and transcendental functions of one variable
by the homogeneity condition (first order operators in $M_A(\delta)$);
see, e.g., \cite[Prop 1.3,7]{SST}).
The second reason is that if we do not set these $z_i$'s to $1$,
the procedure to get a rational solution requires more computational resources,
which are wasteful because of the homogeneity condition.
For the frame $F$, we have the following connection and dual connection matrices
$$
\Omega=\left(
\begin{array}{cc}
0& \frac{    - c\gamma_2} {   {z}_{4}- 1}  \\
 \frac{ 1} {  {z}_{4}}&  \frac{     -(  c+ {\gamma}_{2})  {z}_{4}+c- {\gamma}_{1}} {   {z}_{4}  (  {z}_{4}- 1)} \\
\end{array}
\right) dz_4
$$
$$
\Omega^\vee
=\left(
\begin{array}{cc}
0&  \frac{    - c\gamma_2} {   {z}_{4}- 1}\\
 \frac{ 1} {  {z}_{4}} &  \frac{     (  c+ {\gamma}_{2})  {z}_{4}-c+ {\gamma}_{1}} {   {z}_{4}  (  {z}_{4}- 1)}  \\
\end{array}
\right) dz_4
$$
These can be obtained by a Gr\"obner basis of the hypergeometric ideal $M_A(\delta)$
(see, e.g., \cite[Chapter 6]{dojo-en}, \cite{SST}).
We find the following rational solution of the secondary equation
$ d_{z_4} I - {}^t \Omega I - I \Omega^\vee=0$. 
$$
I=
\left(
\begin{array}{cc}
 1&    \frac{ c\gamma_2} { \gamma_1+\gamma_2} \\
 \frac{ -c\gamma_2} { \gamma_1+\gamma_2}&  \frac{ c\gamma_2 ( {\gamma}_{1}- c)} {\gamma_1+\gamma_2 } 
\end{array}
\right).
$$
Let us utilize the quadratic relation 
of \cite[Theorem 8.1]{MH}
to determine the constant multiple of $I$.
Let $T$ be a unimodular triangulation $\{123,234\}$.
From the relation, we have 
$$\gamma_1 \gamma_2  \sum_{\sigma \in T} \frac{\pi^{3}}{\sin \pi A_\sigma^{-1} \delta}
 \varphi_{\sigma,0}(z; \delta) \varphi_{\sigma,0}(z; -\delta) =
\frac{\langle \frac{dx}{x}, \frac{dx}{x} \rangle_{ch}}{ 2 \pi \sqrt{-1}}
$$
where $\varphi_{\sigma,0}(z;\delta)$ is the series solution of the form
$$
\sum_{k \in {\bf N}_0} \frac{z^{kL+\rho}}{\prod_{i=1}^4 \Gamma(1+\rho_i+ k L_i)} ,
\quad
 L=(1,-1,-1,1), 
 A \rho + \delta =0, \rho_i = 0 \ \mbox{when $i\not\in \sigma$}
$$
and $A_\sigma$ is the submatrix of $A$ constructed by taking the columns standing for
$\sigma$.
For example, when $\sigma=234$, $A_\sigma$ is the $3 \times 3$ matrix of which columns
are the second, third, and the 4th columns of $A$.
The constant term of the left-hand side is 
\begin{equation} \label{eq:dxdx20181228}
\frac{1}{\gamma_1-c} \left( \frac{\gamma_1}{c} + \frac{\gamma_2}{c-\gamma_1-\gamma_2} \right)
= \frac{   -  (  \gamma_{1}+ \gamma_{2})} {   {c}  (   {c}- \gamma_{1}- \gamma_{2})}
\end{equation}
by utilizing the rule $\Gamma(1+z)\Gamma(1-z)=z \frac{\pi}{\sin \pi z}$. We multiply a constant to $I$ so that the $(1,1)$ element 
is equal to the constant (\ref{eq:dxdx20181228}) $\times 2 \pi \sqrt{-1}$.
Thus, we obtain the following intersection matrix of the twisted cohomology group: 
\begin{equation}
2 \pi \sqrt{-1} 
\left( \begin{array}{cc}
 \frac{   -   (  {\gamma}_{1}+ {\gamma}_{2})} {   {c}  (   {c}- {\gamma}_{1}- {\gamma}_{2})}&  \frac{   -  {\gamma}_{2}} {    {c}- {\gamma}_{1}- {\gamma}_{2}} \\
 \frac{  {\gamma}_{2}} {    {c}- {\gamma}_{1}- {\gamma}_{2}}&  \frac{   {\gamma}_{2}  (  {c}- {\gamma}_{1})} { {c}- {\gamma}_{1}- {\gamma}_{2}} \\
\end{array} \right).
\end{equation}
\end{exa}

\bigskip

We conclude this section with simple observations on the cohomology intersection numbers associated to GKZ system. The first observation is on the dual fan of the Newton polytope of the LCM of the denominators (common denominator) of the entries of $I_{ch}$. To begin with, we recall the basic notion of Newton polytope (\cite[Chapter 6]{GKZbook}). 
Let $f(z)=\sum_\alpha f_\alpha z^\alpha$ be a polynomial in $z=(z_1,\dots,z_N)$. The Newton polytope ${\rm New}(f)$ is the convex hull of the index set $\{\alpha\mid f_\alpha\neq 0\}$ in $\R^{N\times 1}$. For any convex polytope $P$ in $\R^{N\times 1}$ and its face $F$, its normal cone $N_F(P)$ is defined to be the set of $\omega\in(\R^{N\times 1})^*=\R^{1\times N}$ such that the face determined by the vector $-\omega$ is $F$ ($\langle -w, x\rangle$ takes the maximum on $F$ while $x\in P$). The dual fan of the  Newton polytope ${\rm New}(f)$ is denoted by ${\rm N}(f)$. Now let us denote by $E_A(z)$ the product of principal $A_{\Gamma}$-discriminants for any face $\Gamma$ of ${\rm New}(A)$ (\cite{GKZbook} Chapter 9 Definition 1.2). Here, ${\rm New}(A)$ is the convex hull of column vectors of $A$. Since the secondary equation is a tensor product of $M_A(\delta)$ and $M_A(-\delta)$, its singular locus is contained in the vanishing locus of $E_A(z)$. If $g(z)$ denotes the common denominator of $I_{ch}$, we have $\{ g(z)=0\}\subset\{ E_A(z)=0\}$. By Hilbert's nullstellensatz, there is a polynomial $h(z)$ and a positive integer $l$ such that $E_A(z)^l=g(z)h(z)$. Taking their Newton polytopes, we have ${\rm New}(E_A^l)={\rm New}(g)+{\rm New}(h)$. Here $+$ is the Minkowski sum of convex polytopes. If we take their dual fans, we see that the left hand side gives the secondary fan (\cite[Chapter 7, Proposition 1.5, Chapter 10, Theorem 1.4]{GKZbook}) while the right hand side is a fan which is a refinement of ${\rm N}(g)$. Summing up all the arguments above, we obtain a
\begin{thm}
Suppose that the parameter $\delta$ is non-resonant and $\gamma_l\notin\Z$ for any $l=1,\dots,k$. Let $g$ be the common denominator of $I_{ch}$. Then, the secondary fan  is a refinement of ${\rm N}(g)$.
\end{thm}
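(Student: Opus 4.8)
\noindent\emph{Proof proposal.}
The plan is to confine the poles of $I_{ch}$ to the zero locus of the principal $A$-determinant $E_A$, and then convert an algebraic divisibility into an inclusion of normal fans. I would begin by recording why the common denominator $g$ is a legitimate object. Under the stated hypotheses ($\delta$ non-resonant, $\gamma_l\notin\Z$) the regularization condition \eqref{Regularization} holds and $M_A(\delta)\simeq\int_\pi(\mathcal{O}_X,\nabla)$ by \cite[Theorem 2.12]{MH}, while the GKZ system is irreducible (\cite{B2010,GKZEuler,SW2010}); hence \cref{thm:main} applies, \eqref{SecondaryEq} is a regular connection, and since $I_{ch}$ is monodromy invariant, \cref{lem:Meromorphy} shows that $I_{ch}$ is an algebraic (rational) horizontal section of \eqref{SecondaryEq}. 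In particular the entries of $I_{ch}$ are rational functions on $U$, and $g$ (the lcm of their reduced denominators) is well defined up to a nonzero constant.

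Next I would localize the singularities. By \cref{prop:Tensor}, \eqref{SecondaryEq} is the tensor connection $\D N\otimes N$, which is assembled from $M_A(\delta)$ and $M_A(-\delta)$; since $E_A$ depends only on $A$, and by \cite[Lemma 3.3]{Adolphson} $M_A(\pm\delta)$ is a connection on the Newton non-degenerate locus $\{E_A(z)\neq 0\}$, the singular locus of \eqref{SecondaryEq} is contained in $\{E_A(z)=0\}$. A rational horizontal section of a connection can acquire poles only along the singular locus of that connection, so every entry of $I_{ch}$ has polar locus contained in $\{E_A=0\}$; hence $\{g(z)=0\}\subseteq\{E_A(z)=0\}$ as closed subvarieties of $\A^N_z$. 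Hilbert's Nullstellensatz then gives $E_A\in\sqrt{(g)}$, that is, $E_A(z)^l=g(z)h(z)$ for some polynomial $h(z)$ and some integer $l\ge 1$.

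It remains to run the polytope argument sketched just before the statement. Because the Newton polytope of a product is the Minkowski sum of the Newton polytopes, ${\rm New}(E_A^l)={\rm New}(g)+{\rm New}(h)$, while ${\rm New}(E_A^l)=l\cdot{\rm New}(E_A)$. Passing to dual fans, dilation leaves the normal fan unchanged, so ${\rm N}(E_A^l)={\rm N}(E_A)$, and the normal fan of a Minkowski sum is the common refinement of the normal fans of the summands; consequently ${\rm N}(E_A)$ is the common refinement of ${\rm N}(g)$ and ${\rm N}(h)$, in particular a refinement of ${\rm N}(g)$. Since ${\rm New}(E_A)$ is the secondary polytope, so that ${\rm N}(E_A)$ is the secondary fan (\cite[Chapter 7, Proposition 1.5 and Chapter 10, Theorem 1.4]{GKZbook}), the secondary fan refines ${\rm N}(g)$, which is the assertion.

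I do not expect a genuine obstacle: the theorem is a corollary of \cref{thm:main} together with standard facts about principal $A$-determinants and secondary polytopes. If anything, the delicate step is the second one, establishing $\{g=0\}\subseteq\{E_A=0\}$ --- one must pin down the precise meaning of the common denominator so that $\{g=0\}$ is exactly the polar locus of $I_{ch}$, and one must remember that $E_A$ already contains the vertex factors $z_i$, so the components of the singular locus lying along coordinate hyperplanes are accounted for. The two polytope facts invoked in the last step (Newton polytope of a product is the Minkowski sum; normal fan of a Minkowski sum is the common refinement) are standard and need only be cited in the right form.
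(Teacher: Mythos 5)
Your proposal is correct and follows essentially the same route as the paper: confine the polar locus of $I_{ch}$ to $\{E_A=0\}$ via the tensor-connection description of the secondary equation, apply the Nullstellensatz to get $E_A^l=gh$, and pass to Newton polytopes and their normal fans, identifying ${\rm N}(E_A^l)$ with the secondary fan. The only difference is that you spell out the preliminaries (rationality of $I_{ch}$ from \cref{thm:main} and the well-definedness of $g$) that the paper treats as already established.
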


\noindent
Note that by \cite[Chapter 6, Corollary 1.6.]{GKZbook}, there is an injective correspondence from open cones of ${\rm N}(g)$ to convergence domains of Laurent series expansions of $\frac{1}{g(z)}$. 

The second observation is on how the cohomology intersection number depends on the parameters $\delta$. In Example \ref{example:gauss}, the basis $F$ depends rationally on the parameters $\delta$. In this case, the cohomology intersection numbers are rational functions in both the variables $z$ and the parameters $\delta$ with coefficients in the field of rational numbers $\mathbb{Q}$. Let us formulate the observation above. For any field extension $K\subset\C$ of $\Q$, we set $(\Gm(K))^n_x={\rm Spec}\left( K[x_1^\pm,\dots,x_n^\pm]\right)$. Since the (reduced) defining equation $E_A(z)$ of the complement $Z=\A^N\setminus U$ is a rational polynomial, we can also consider a reduced scheme $U(K)$ defined over $K$ whose base change to $\C$ is isomorphic to $U$. We denote by $D_{U(K)}$ the ring of differential operators on $U(K)$. Any element $P$ of $D_{U(K)}$ is a finite sum $P=\frac{1}{E_A(z)^l}\sum_\alpha a_\alpha(z)\partial^\alpha$ where $a_\alpha(z)$ is a polynomial with coefficients in $K$ and $l$ is an integer.

\begin{thm}\label{thm:Coefficients}
Suppose that $\delta$ is non-resonant, $\gamma_l\notin\Z$, and $A$ as in (\ref{CayleyConfigu}) admits a unimodular regular triangulation $T$. Then, for any $P_1,P_2\in D_{U(\mathbb{Q}(\delta))}$, the cohomology intersection number $\frac{\langle P_1\cdot\frac{dx}{x},P_2\cdot\frac{dx}{x}\rangle_{ch}}{(2\pi\ii)^n}$ belongs to the field $\mathbb{Q}(\delta)(z)$.
\end{thm}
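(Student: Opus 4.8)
The plan is to reduce the statement to rationality over $\Q(\delta)$ of the cohomology intersection matrix in a suitable basis, to identify that matrix up to a scalar with the essentially unique rational solution of the secondary equation \eqref{SecondaryEq} — which can already be produced over $\Q(\delta)$ — and to fix the scalar by the quadratic relation of \cite[Theorem 8.1]{MH}. First I would set up the bases. Treating $\delta={}^t(\gamma,c)$ as a tuple of indeterminates, embed $\Q(\delta)$ into $\C$ by specialising $\delta$ to algebraically independent complex numbers; these are non-resonant with $\gamma_l\notin\Z$, so that \cref{thm:main} is available (and the case of a fixed non-resonant parameter then follows by specialisation). Fix a Gr\"obner basis of the left ideal $H_A(\delta)$ defining $M_A(\delta)$, and let $\{\phi_i\}_{i=1}^r$ be the images in $\int^0_\pi(\mathcal{O}_X,\nabla)$ of the standard monomials applied to $\frac{dx}{x}$ through $\nabla^{GM}$, as in \cref{th:algorithm}; similarly let $\{\psi_i\}$ come from $H_A(-\delta)$ and $\nabla^{\vee GM}$. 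Since the constant monomial $1$ is always standard and corresponds to $\frac{dx}{x}$ under $M_A(\delta)\simeq\int^0_\pi(\mathcal{O}_X,\nabla)$ and $M_A(-\delta)\simeq\int^0_\pi(\mathcal{O}_X,\nabla^\vee)$ (\cite[Corollary 3.4]{MH}), we may take $\phi_1=\psi_1=\frac{dx}{x}$. The generators of $H_A(\pm\delta)$ — the toric operators $\pd{}^u-\pd{}^v$ and the Euler operators $\sum_j a_{ij}z_j\pd{j}\pm\delta_i$ — have coefficients in $\Z[\delta]$; hence these Gr\"obner bases, the bases $\{\phi_i\},\{\psi_i\}$, and the connection matrices $\Omega,\Omega^\vee$ of $\nabla^{GM},\nabla^{\vee GM}$ in \eqref{GMAction} are all defined over $\Q(\delta)(z)$, and for any $P\in D_{U(\Q(\delta))}$ the normal form of $P$ modulo the Gr\"obner basis expresses $P\cdot\frac{dx}{x}$ as a $\Q(\delta)(z)$-linear combination of the $\phi_i$, resp.\ of the $\psi_i$. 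Because $\langle\bullet,\bullet\rangle_{ch}$ is $\mathcal{O}_U$-bilinear by the Corollary to \cref{thm:main}, it now suffices to prove that every entry of $(2\pi\ii)^{-n}I_{ch}$, with $I_{ch}=\big(\langle\phi_i,\psi_j\rangle_{ch}\big)_{i,j}$, lies in $\Q(\delta)(z)$.

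The second step is to descend the rational-solution space of \eqref{SecondaryEq}. By the above it is a connection over $\Q(\delta)(z)$; over $\C(z)$ it is regular and, by \cref{thm:main} (our $\delta$ being non-resonant, $N$ is irreducible), its space of rational solutions is one-dimensional, spanned by $I_{ch}$. The computation of rational solutions of a holonomic system reduces — via effective degree and pole-order bounds that are independent of the base field — to linear algebra over that field (see \cite{Oaku-Takayama-Tsai} and its references), so the rational-solution space is compatible with base-field extension; in particular it is already nonzero over $\Q(\delta)(z)$. Choosing a nonzero solution $I_0$ with entries in $\Q(\delta)(z)$, both $I_0$ and $I_{ch}$ lie in the one-dimensional $\C(z)$-rational-solution space, so $I_{ch}=\lambda I_0$ for a scalar $\lambda\in\C^\times$.

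It remains to prove $(2\pi\ii)^{-n}\lambda\in\Q(\delta)$, which I would do by comparing the $(1,1)$-entry with \cite[Theorem 8.1]{MH}. As in \cref{example:gauss}, the quadratic relation takes the form
\[
\frac{\langle\tfrac{dx}{x},\tfrac{dx}{x}\rangle_{ch}}{(2\pi\ii)^n}=\gamma_1\cdots\gamma_k\sum_{\sigma\in T}\frac{\pi^{n+k}}{\prod_i\sin\!\big(\pi(A_\sigma^{-1}\delta)_i\big)}\,\varphi_{\sigma,0}(z;\delta)\,\varphi_{\sigma,0}(z;-\delta),
\]
the product being over the $n+k$ entries of $A_\sigma^{-1}\delta$ (for $\sigma\in T$ the matrix $A_\sigma$ is square). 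The exponent vector $\rho=\rho^{(\sigma)}$ of $\varphi_{\sigma,0}(z;\delta)$ is supported on $\sigma$ and satisfies $A_\sigma\rho_\sigma=-\delta$; thus $A_\sigma^{-1}\delta=-\rho_\sigma$, the exponent of $\varphi_{\sigma,0}(z;-\delta)$ is $-\rho$, and, by $\Gamma(1+w)\Gamma(1-w)=\pi w/\sin\pi w$, the coefficient of $z^0$ in $\varphi_{\sigma,0}(z;\delta)\varphi_{\sigma,0}(z;-\delta)$ equals $\prod_{i\in\sigma}\frac{\sin(\pi\rho_i)}{\pi\rho_i}$. After multiplication all factors $\pi$, $\sin$ and $\Gamma$ cancel, and the coefficient of $z^0$ on the right-hand side becomes
\[
\kappa:=\gamma_1\cdots\gamma_k\,(-1)^{n+k}\sum_{\sigma\in T}\,\prod_{i\in\sigma}\big(\rho_i^{(\sigma)}\big)^{-1}.
\]
Unimodularity of $T$ gives $\abs{\det A_\sigma}=1$, so $A_\sigma^{-1}$ is an integer matrix and each $\rho_i^{(\sigma)}$ is a $\Z$-linear form in $\delta$; hence $\kappa\in\Q(\delta)$, and it is a nonzero rational function of $\delta$ (cf.\ \eqref{eq:dxdx20181228}). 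Since $(I_{ch})_{11}=\langle\frac{dx}{x},\frac{dx}{x}\rangle_{ch}=\lambda(I_0)_{11}$ and $(I_0)_{11}\in\Q(\delta)(z)$ has, in the same expansion, a leading coefficient $\kappa_0\in\Q(\delta)$ which is nonzero (as $(I_{ch})_{11}$ is), comparison of leading coefficients gives $(2\pi\ii)^n\kappa=\lambda\kappa_0$, whence $(2\pi\ii)^{-n}\lambda=\kappa/\kappa_0\in\Q(\delta)$. Therefore $(2\pi\ii)^{-n}I_{ch}=\big((2\pi\ii)^{-n}\lambda\big)I_0$ has entries in $\Q(\delta)(z)$, and by the reduction of the first paragraph the theorem follows.

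I expect the last step to be the main obstacle: extracting the normalising scalar from \cite[Theorem 8.1]{MH} and verifying that the transcendental contributions $\pi$, $\sin$ and $\Gamma$ coming from the homology intersection numbers and from the Gamma-factors of the series solutions cancel exactly — this is precisely where unimodularity of $T$ is used. By contrast, the descent of the one-dimensional rational-solution space from $\C$ to $\Q(\delta)$ is comparatively routine, relying only on the field-independence of the degree and pole-order bounds in the rational-solution algorithm, and the bilinearity reduction of the first paragraph is formal.
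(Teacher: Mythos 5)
Your proposal is essentially correct but takes a genuinely different route from the paper's. The paper never descends the rational-solution space of the secondary equation. Instead it reduces (via \eqref{GMAction}) to the forms $x^{\bf a}h^{\bf b}\frac{dx}{x}$, uses the torus action $j_A$ and the exact sequence $1\to(\C^*)^{n+k}\to(\C^*)^{N}\to(\C^*)^{N-n-k}\to1$ together with the homogeneity $I(j_A(t)z;\delta)=\tau_1^{{\bf b}+{\bf b}'}\tau_2^{-{\bf a}-{\bf a}'}I(z;\delta)$ to cut the intersection number down to a function $I_1(\zeta;\delta)$ of the $N-n-k$ essential variables, observes from \cref{thm:main} and \cref{prop:Tensor} that $I_1=f(\zeta;\delta)/(\zeta^{\alpha_0}E_A(\zeta^C)^l)$ with $E_A\in\Q[z]$, and then reads off \emph{every} coefficient of the numerator $f$ from the Laurent expansion of \cite[Theorem 8.1]{MH} (all of whose coefficients lie in $\Q(\delta)$ by unimodularity of $T$), via the one-variable substitution $\zeta=\xi^{\phi}$. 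Your argument replaces the homogeneity reduction and the full use of the expansion by (i) descent of the one-dimensional rational-solution space of \eqref{SecondaryEq} from $\C(z)$ to $\Q(\delta)(z)$ and (ii) normalisation by a single coefficient. This makes the role of \cref{thm:main} more transparent and uses less of \cite[Theorem 8.1]{MH}; the paper's route avoids the descent and the nonvanishing issue below. Your descent step is sound, but justify it without appealing to field-independent bounds from the rational-solution algorithm: since the $\C(z)$-solution space is exactly $\C\cdot I_{ch}$, take the ansatz space $W$ of matrices with denominator $\zeta^{\alpha_0}E_A^l\in\Q[z]$ and numerator degree bounded by that of $I_{ch}$; the flatness equations are $\Q(\delta)$-linear on $W$, and the rank of a linear system does not change under field extension.

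There are two soft spots. First, your normalisation compares constant terms, and you assert without proof that $\kappa=(-1)^{n+k}\gamma_1\cdots\gamma_k\sum_{\sigma\in T}\prod_{i\in\sigma}(\rho_i^{(\sigma)})^{-1}$ is nonzero. Your cancellation computation is correct (it reproduces \eqref{eq:dxdx20181228} in \cref{example:gauss}), but nonvanishing of this alternating sum of products of reciprocal linear forms is not obvious and is not established by citing one example. If $\kappa=0$ the comparison collapses; the repair is to compare an arbitrary nonzero coefficient of the expansion of $(I_{ch})_{11}$ (some coefficient is nonzero since $I_{ch}\neq0$ and the pairing with $\frac{dx}{x}$ in the first slot cannot vanish identically by perfectness), and for that you need that \emph{all} coefficients of the \cite[Theorem 8.1]{MH} expansion lie in $\Q(\delta)$ --- which is exactly the input the paper uses, so the gap is patchable but must be closed. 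Second, \cite[Theorem 8.1]{MH} requires generic parameters; the paper invokes \cite[Lemma 3.2]{AET} to reduce to that case, whereas your remark that ``the case of a fixed non-resonant parameter then follows by specialisation'' is not an argument (a fixed non-resonant $\delta$ need not be algebraically independent, and specialising a generic identity can hit poles). You should either cite the same deformation lemma or restrict the scalar-fixing step to the generic case and descend as the paper does.
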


\begin{proof}
By the definition of the action of the ring $\mathbb{Q}(\delta)\langle z,\partial_z\rangle$, it is enough to show that the function $I(z;\delta)=\frac{\langle x^{\bf a}h^{\bf b}\frac{dx}{x},x^{{\bf a}^\prime}h^{{\bf b}^\prime}\frac{dx}{x}\rangle_{ch}}{(2\pi\ii)^n}$ belongs to the field $\mathbb{Q}(\delta)(z)$. Here, ${\bf a}=(a_1,\dots,a_n),{\bf a}^\prime\in\Z^n$, ${\bf b}=(b_1,\dots,b_k),{\bf b}^\prime\in\Z^k$ and $x^{\bf a}h^{\bf b}=x_1^{a_1}\cdots x_n^{a_n}h_1^{b_1}\cdots h_k^{b_k}$. Due to \cite[Lemma 3.2]{AET}, we may assume that the parameter $\delta$ is generic so that the formula \cite[Theorem 8.1]{MH} holds. Let $B$ be an $N\times (N-n-k)$ integer matrix whose column vectors generate the kernel lattice $\Ker(A\times:\Z^{N}\rightarrow \Z^{n+k})$ and are compatible with the regular triangulation $T$ in the sense of \cite{GKZToral}. We take an $(N-n-k)\times N$ integer matrix $C$ such that $CB$ is an identity matrix. We define a morphism $j_A:(\C^*)^{n+k}\rightarrow (\C^*)^N$ by $j_A(t)=t^A$ and a morphism $\pi_B:(\C^*)^N\rightarrow(\C^*)^{N-n-k}$ by $\pi_B(z)=z^B$. Note that the morphism $j_A$ combined with the product structure of $(\C^*)^N$ induces an action of $(\C^*)^{n+k}$ on $(\C^*)^N$. For any $t=(t_1,\dots,t_{n+k})$ in $(\C^*)^{n+k}$, we write $\tau_1=(t_1,\dots,t_k)$ and $\tau_2=(t_{k+1},\dots,t_{n+k})$. By \cite[Theorem 8.1]{MH}, we have a formula $I(j_A(t)z;\delta)=\tau_1^{\bf b+b^\prime}\tau_2^{\bf -a-a^\prime} I(z;\delta).$ Taking into account this homogeneity property and the exact sequence
\begin{equation}
1\rightarrow(\C^*)^{n+k}\overset{j_A}{\rightarrow}(\C^*)^{N}\overset{\pi_B}{\rightarrow}(\C^*)^{N-n-k}\rightarrow 1,
\end{equation}
we only have to show that $I_1(\zeta;\delta)\overset{def}{=}I(\zeta^C;\delta)\in\mathbb{Q}(\delta)(z)$. Here, $\zeta$ is a coordinate of $(\C^*)^{N-n-k}$. Note that the Laurent series expansion formula \cite[Theorem 8.1]{MH} is valid when the absolute values of entries of $\zeta$ are all small. Since $I(z;\delta)$ is a solution of the secondary equation (\ref{SecondaryEq}) and (\ref{SecondaryEq}) is isomorphic to $M_A(\delta)\otimes M_A(-\delta)$, for an appropriate non-negative integer $l$ and a polynomial $f(\zeta;\delta)$ in $\zeta$ and a vector $\alpha_0\in\mchange{{\bf N}_0}^{N-n-k}$, we have $I_1(\zeta;\delta)=\frac{f(\zeta;\delta)}{\zeta^{\alpha_0} E_A(\zeta^C)^l}$. Note that $E_A(z)\in\mathbb{Q}[z]$. We write $f(\zeta;\delta)=\sum_{i=1}^df_i(\delta)\zeta^{\alpha_i}$ where $\alpha_i$ are elements of $\mchange{{\bf N}_0}^{N-n-k}$. By choosing a suitable vector $\phi=(\phi_1,\dots,\phi_{N-n-k})\in\mchange{{\bf N}_0}^{N-n-k}$ and by rearranging $\alpha_i$ if necessary, we may assume $0\leq\langle \phi,\alpha_1\rangle<\cdots<\langle \phi,\alpha_d\rangle$. Here, $\langle\bullet,\bullet\rangle$ is the dot product of vectors. We set $g(\zeta)=E_A(\zeta^C)^l$. For a complex variable $\xi$, we write $\xi^\phi=(\xi^{\phi_1},\dots,\xi^{\phi_{N-n-k}}).$ We put $F(\xi)\overset{def}{=}f(\xi^\phi;\delta)=\xi^{\langle \phi,\alpha_0\rangle}g(\xi^{\phi})I_1(\xi^\phi;\delta).$ By \cite[Theorem 8.1]{MH} and the fact that $B$ is compatible with $T$, we see that $I_1(\xi^\phi;\delta)$ is a Laurent series in $\xi$ with coefficients in $\mathbb{Q}(\delta)$. Since $F(\xi)=\sum_{i=1}^df_i(\delta)\xi^{\langle \phi,\alpha_i\rangle}$, we obtain $f_i(\delta)\in\mathbb{Q}(\delta)$. Thus, we have $f(\zeta;\delta)\in\mathbb{Q}(\delta)[\zeta]$, hence we \mchange{obtain} $I_1(\zeta;\delta)\in\mathbb{Q}(\delta)(z)$.

\end{proof}

For any field extension $K\subset\C$ of $\Q$, we define the symbol $\int^0_{\pi}(\mathcal{O}_{X(K)},\nabla)\restriction_{U(K)}$ by
\begin{equation}
\mchange{\mathbb{H}}^n\left( (\Gm(K))^n_x\times U(K);\left(\Omega^\bullet_{(\Gm(K))^n_x\times U(K)/U(K)}\left(*\bigcup_{l=1}^k\{ h_{l,z^{(l)}}(x)=0\}\right),\nabla_x\right)\right),
\end{equation}
\mchange{where $\mathbb{H}^n$ stands for the $n$-th hypercohomology group.} By the formula (\ref{GMAction}), $\int^0_{\pi}(\mathcal{O}_{X(K)},\nabla)\restriction_{U(K)}$ naturally has a structure of $D_{U(K)}$-module. When $\delta$ is non-resonant, $\gamma_l\notin\Z$, and $K=\Q(\delta)$, any element $[\phi]\in\int_{\pi}^0(\mathcal{O}_{X(K)},\nabla)\restriction_{U(K)}$ can be written as $[\phi]=P\cdot[\frac{dx}{x}]$ for some $P\in D_{K}$. Therefore, we obtain a

\begin{thm}\label{thm:Coefficients2}
Under the assumption of \cref{thm:Coefficients}, the normalized cohomology intersection pairing $B=\frac{\langle\bullet,\bullet\rangle_{ch}}{(2\pi\ii)^n}$ defines a perfect bilinear pairing 
\begin{equation}
\int^0_{\pi}(\mathcal{O}_{X(\Q(\delta))},\nabla)\restriction_{U(\Q(\delta))}\times\int^0_{\pi}(\mathcal{O}_{X(\Q(\delta))},\nabla^\vee)\restriction_{U(\Q(\delta))}\rightarrow\mathcal{O}_{U(\Q(\delta))}
\end{equation}
such that the formula (\ref{Compatibility}) holds.
\end{thm}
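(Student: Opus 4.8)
The plan is to deduce Theorem~\ref{thm:Coefficients2} from the two preceding results by a purely formal argument, so that essentially no new analysis is needed. First I would record that the normalized pairing $B=\frac{\langle\bullet,\bullet\rangle_{ch}}{(2\pi\ii)^n}$ takes values in $\mathcal{O}_{U(\Q(\delta))}$: given any $[\phi]=P_1\cdot[\frac{dx}{x}]$ and $[\psi]=P_2\cdot[\frac{dx}{x}]$ with $P_1,P_2\in D_{U(\Q(\delta))}$, Theorem~\ref{thm:Coefficients} says exactly that $B([\phi],[\psi])\in\Q(\delta)(z)$, and since the only poles allowed are along $Z=\A^N\setminus U$ (the pairing is holomorphic on $U$ by the general theory of \S2, being a horizontal section of a regular connection on $U$), this rational function is a regular function on $U(\Q(\delta))$. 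Hence $B$ is a well-defined $\mathcal{O}_{U(\Q(\delta))}$-bilinear map on the stated tensor product; bilinearity over $\mathcal{O}_{U(\Q(\delta))}$ follows from the fact that the action of $D_{U(\Q(\delta))}$ on $\int^0_\pi(\mathcal{O}_{X(\Q(\delta))},\nabla)\restriction_{U(\Q(\delta))}$ extends the $\mathcal{O}_{U(\Q(\delta))}$-module structure and $B$ is $\C$-bilinear after base change.

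Next I would verify the compatibility formula~(\ref{Compatibility}), namely $dB(\phi,\psi)=B(\nabla^{GM}\phi,\psi)+B(\phi,\nabla^{\vee GM}\psi)$. Over $\C$ this is just the invariant form of the secondary equation~(\ref{SecondaryEq}), established in \S2 right after Theorem~\ref{thm:main}. Since all the objects involved --- the Gauss-Manin connections $\nabla^{GM}$, $\nabla^{\vee GM}$ via formula~(\ref{GMAction}), the basis $[\frac{dx}{x}]$, the operators $P_i$, and $B$ itself by the previous paragraph --- are defined over $\Q(\delta)$, and the identity holds after base change to $\C$, it holds already over $\Q(\delta)$; concretely one checks it on generators $[\phi]=P_1\cdot[\frac{dx}{x}]$, $[\psi]=P_2\cdot[\frac{dx}{x}]$, where it becomes an identity in $\Q(\delta)(z)$ that we know to be true in $\C(z)$.

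Finally, for perfectness: the pairing $B$ over $\C$ is perfect because it is $(2\pi\ii)^{-n}$ times the cohomology intersection pairing, which is non-degenerate at every $y\in U$ by the discussion surrounding~(\ref{CohomologyComparison}) and~(\ref{DeligneGrothendieck}). Perfectness of an $\mathcal{O}_{U(\Q(\delta))}$-bilinear pairing between two locally free modules of the same finite rank $r$ amounts to the non-vanishing on $U(\Q(\delta))$ of the determinant of a representing matrix; this determinant is an element of $\Q(\delta)(z)$ (by Theorem~\ref{thm:Coefficients}, applied entrywise) whose zero locus and pole locus are contained in $Z$, and which is nowhere zero on $U$ after base change to $\C$. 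Therefore it is already nowhere zero on $U(\Q(\delta))$, and $B$ is perfect. Assembling these three points --- values in $\mathcal{O}_{U(\Q(\delta))}$, compatibility~(\ref{Compatibility}), and perfectness --- gives the theorem.

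I expect the only genuine subtlety to be the descent argument: one must make sure that "defined over $\Q(\delta)$" is meant in the correct functorial sense, i.e.\ that base change to $\C$ is faithfully flat and commutes with taking hypercohomology of the relative de Rham complex, so that an identity or a non-vanishing statement verified over $\C$ can be pulled back. This is standard but should be stated carefully --- it is the point where one uses that $E_A(z)\in\Q[z]$ (so $U$, $X$, the $h_{l,z^{(l)}}$, and the connection $\nabla_x$ are all defined over $\Q$), that $\delta$ enters only as a transcendental parameter, and that the comparison $\int^0_\pi(\mathcal{O}_{X(K)},\nabla)\restriction_{U(K)}\simeq M_A(\delta)\restriction_{U(K)}$ of \cite[Corollary 3.4]{MH} is compatible with field extensions. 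Everything else is a direct transcription of \S2 and of Theorems~\ref{thm:Coefficients} and~\ref{thm:Coefficients2}'s hypotheses.
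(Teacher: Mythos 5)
Your proposal is correct and follows essentially the same route as the paper, which in fact offers no separate proof: it states the theorem as an immediate consequence of Theorem~\ref{thm:Coefficients} together with the observation that every class in $\int^0_{\pi}(\mathcal{O}_{X(\Q(\delta))},\nabla)\restriction_{U(\Q(\delta))}$ is of the form $P\cdot[\frac{dx}{x}]$ with $P\in D_{U(\Q(\delta))}$, exactly the reduction you make. Your additional remarks on regularity of the values on $U$, descent of the compatibility identity and of perfectness from $\C$ to $\Q(\delta)$, and the base-change subtlety are all correct elaborations of what the paper leaves implicit.
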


\noindent
Therefore, when the parameter $\delta$ is generic, we can treat it as a formal symbol and we do not need to consider any algebraic extension of $\Q$ in our algorithm.

\section{A period integral associated to a family of K3 surfaces (\cite{NS01})}

This section is a demonstration of our algorithmic method
to obtain the intersection  matrix of twisted cohomology groups
and functional identities derived by the twisted period relation.
The intersection matrix and some functional identities
presented in this section have not been obtained by other methods.
We also note that once we obtain the intersection matrix, the correctness
modulo a constant multiple can be checked if it satisfies the secondary
equation (\ref{SecondaryEq}).  Since the entries of the matrix are rational functions,
it can be checked by computer algebra systems.

We consider an Euler integral 
\begin{equation}
f(z)=\int_\Gamma u\frac{dx\wedge dy}{xy},\ 
u=(z_1x^3+z_2x^2y+z_3x^2y^{-1}+z_4x^2+z_5x)^{-c_1}x^{c_2}y^{c_3}
\end{equation}
under the non-resonance condition on $c$.
This function is a solution of the GKZ system
associated to the matrix
$$
\left(\begin{array}{c}1\\{\bf a}^{(1)}(j)\end{array}\right) = \left(
\begin{array}{ccccc}
  1 & 1 & 1 & 1 & 1 \\
  3 & 2 & 2 & 2 & 1 \\
  0 & 1 & -1 & 0 & 0 \\
\end{array}
\right).
$$
We can see that the sets of rational differential forms
\begin{equation}
\omega=\frac{dx \wedge dy}{xy},
 \frac{\partial \log u}{\partial z_5} \omega,
 \frac{\partial \log u}{\partial z_4} \omega,
 \frac{\partial^2 u}{\partial z_5^2} \frac{1}{u} \omega
\end{equation}
and
\begin{equation}
\omega=\frac{dx \wedge dy}{xy},
 \frac{\partial \log u^{-1}}{\partial z_5} \omega,
 \frac{\partial \log u^{-1}}{\partial z_4} \omega,
 \frac{\partial^2 (u^{-1})}{\partial z_5^2} u \omega
\end{equation}
are bases of the twisted cohomology groups
and construct the connection matrices $d_z+\Omega$ and $d_z+\Omega^\vee$ with these bases
by Gr\"obner basis computation (see, e.g., \cite[chapter 6]{dojo-en}).
When 
$c = \left(\begin{array}{c}1/2\\ 1+\varepsilon\\ \varepsilon \end{array}\right)
$ and $z_1=z_2=z_3=1$ (we make this specialization with the same reason mentioned in Example \ref{example:gauss}),
we find by solving the secondary equation that
the intersection matrix of the cohomology group
is a scalar multiple of the matrix
\begin{equation}\label{eqn:4.4}
I=
\begin{pmatrix}
 1& \frac{  -  4{\ve}+ 1}{ 8{z}_{5}}& 0& \frac{    4   \ve^{ 2} +  3  \ve- 1}{  8{z}_{5}^{ 2} } \\

\frac{   4  \ve- 1}{ 8{z}_{5}}& \frac{  -  4   \ve^{ 2} +    \ve}{  8{z}_{5}^{ 2} }& 0& r_{24}\\

0& 0& 0& r_{34}\\

\frac{    4   \ve^{ 2} -  5  \ve+ 1}{ 8 {z}_{5}^{ 2} }&r_{42}&r_{43}&r_{44}
\end{pmatrix}
\end{equation}
where $r_{ij}$ are rational functions in the $z_4$, $z_5$ and $\ve$.
Note that this $I$ gives an example that $I_{ch}$ is not necessarily a constant matrix with respect to $z$.
By \cite[Theorem 8.1]{MH},
we can see that
the self intersection number of the first form is
\begin{equation}  \label{eq:self-intersection45}
  \left\langle \frac{dx \wedge dy}{xy}, \frac{dx \wedge dy}{xy} 
  \right\rangle \  |_{z_1=z_2=z_3=1}
  = \frac{32}{1-16\varepsilon^2} (2 \pi \sqrt{-1})^2.
\end{equation}
Thus, the intersection matrix is equal to
$  (2 \pi \sqrt{-1})^2 \frac{32}{1-16\varepsilon^2} I$,
which gives quadratic relations of GKZ hypergeometric series.
The whole intersection matrix of the twisted cohomology group is posted  
at our web page\footnote{
\url{http://www.math.kobe-u.ac.jp/OpenXM/Math/intersection/shiga3g2-imat45.rr}  ($\varepsilon$ is $b$ in this data. Risa/Asir program.)
}.

Let us show a quadratic relation by
taking a restriction to $\{ z_1=z_2=z_3=z_5=1\}$ (\cite{NS01}).
In this case,
we have 3 independent series
\begin{equation}
\varphi_{345}(1,1,1,z_4,1;c(\varepsilon))=z_4^{-\frac{1}{2}-2\varepsilon}\sum_{m,n\geq 0}\frac{(z_4^{-2})^{m+n}}{\Gamma(1+\varepsilon+n)\Gamma(\frac{1}{2}-2\varepsilon-2m-2n)\Gamma(1+\varepsilon+m)m!n!}
\end{equation}

\begin{align}
 &\varphi_{245}(1,1,1,z_4,1;c(\varepsilon))=\varphi_{134}(1,1,1,z_4,1;c(\varepsilon))\nonumber\\
=&z_4^{-\frac{1}{2}}\sum_{m,n\geq 0}\frac{(z_4^{-2})^{m+n}}{\Gamma(1-\varepsilon+n)\Gamma(\frac{1}{2}-2m-2n)\Gamma(1+\varepsilon+m)m!n!}
\end{align}

\begin{equation}
\varphi_{124}(1,1,1,z_4,1;c(\varepsilon))=z_4^{-\frac{1}{2}+2\varepsilon}\sum_{m,n\geq 0}\frac{(z_4^{-2})^{m+n}}{\Gamma(1-\varepsilon+n)\Gamma(\frac{1}{2}+2\varepsilon-2m-2n)\Gamma(1-\varepsilon+m)m!n!}.
\end{equation}
From our intersection matrix of the twisted cohomology group,
we obtain the following quadratic relation (\cite[Theorem 8.1]{MH}):
\begin{align}
 &\frac{1}{2}\left\{ \frac{\pi^3}{\sin^2\pi\varepsilon\cos\pi(2\varepsilon)}\varphi_{345}(1,1,1,z_4,1;c(\varepsilon))\varphi_{345}(1,1,1,z_4,1;-c(\varepsilon))\right.\nonumber\\
 &-\frac{2\pi^3}{\sin^2\pi\varepsilon}\varphi_{245}(1,1,1,z_4,1;c(\varepsilon))\varphi_{245}(1,1,1,z_4,1;-c(\varepsilon))\nonumber\\
 &\left. +\frac{\pi^3}{\sin^2\pi\varepsilon\cos\pi(2\varepsilon)}\varphi_{124}(1,1,1,z_4,1;c(\varepsilon))\varphi_{124}(1,1,1,z_4,1;-c(\varepsilon))\right\}\nonumber\\
=&\frac{32}{(1-16\varepsilon^2)}.\label{K3QuadRel1}
\end{align}

\noindent
When $\varepsilon=0$, the integral $f(z)$ reduces to the integral discussed in the context of mirror symmetry (\cite{NS01}). 
It satisfies a rank $3$ system. 
We are interested in this case.
Once we obtain the quadratic relation, we can take a limit 
$\varepsilon \rightarrow 0$ and get a quadratic relation for the case
of the resonant parameter value.
However, the set of the naive limits of $\varphi_{ijk}$'s does not give 
a basis of the solution space of the rank $3$ system.
We put $\alpha=\frac{\pi^3}{\sin^2\pi\varepsilon\cos\pi(2\varepsilon)}$, $\beta=-\frac{\pi^3}{\sin^2\pi\varepsilon}$,
\begin{equation}
\Phi_\ve=\scalebox{0.9}{$\Bigl(\varphi_{345}(1,1,1,z_4,1;c(\varepsilon)),\varphi_{245}(1,1,1,z_4,1;c(\varepsilon)),\varphi_{134}(1,1,1,z_4,1;c(\varepsilon)),\varphi_{124}(1,1,1,z_4,1;c(\varepsilon))\Bigr)$}
\end{equation}
and 
\begin{equation}
\Phi^\vee_\ve=\scalebox{0.85}{$\Bigl(\varphi_{345}(1,1,1,z_4,1;-c(\varepsilon)),\varphi_{245}(1,1,1,z_4,1;-c(\varepsilon)),\varphi_{134}(1,1,1,z_4,1;-c(\varepsilon)), \varphi_{124}(1,1,1,z_4,1;-c(\varepsilon))\Bigr).$}
\end{equation}
Let us construct a set of linearly independent solutions
by the Frobenius method from $\Phi_\varepsilon$ and $\Phi^\vee_\varepsilon$.
When $\varepsilon \not= 0$, the twisted period relation (\ref{K3QuadRel1}) can be written in the following form:
\begin{equation}
\Phi_\ve
\begin{pmatrix}
\alpha&0&0&0\\
0&\beta&0&0\\
0&0&\beta&0\\
0&0&0&\alpha
\end{pmatrix}
{}^t\Phi^\vee_\ve
=\frac{64}{(1-16\varepsilon^2)}.
\end{equation}
In order to obtain linearly independent solutions,
we introduce a $4\times 4$ matrix $Q$ defined by
\begin{equation}
Q=
\begin{pmatrix}
1&-\ve^{-1}&-\ve^{-1}&\ve^{-2}\\
0&\ve^{-1}&0&-\ve^{-2}\\
0&0&\ve^{-1}&-\ve^{-2}\\
0&0&0&\ve^{-2}
\end{pmatrix}.
\end{equation}
This matrix $Q$ is chosen so that the set
$$ \lim_{\varepsilon \rightarrow 0} \varphi_{345},\  
 \lim_{\varepsilon \rightarrow 0} (\varphi_{245}-\varphi_{345})/\varepsilon,\ 
 \lim_{\varepsilon \rightarrow 0} (\varphi_{134}-\varphi_{345})/\varepsilon,\ 
 \lim_{\varepsilon \rightarrow 0} (\varphi_{345}-\varphi_{245}-\varphi_{134}+\varphi_{124})/\varepsilon^2 
$$
is a basis of the solutions when $\varepsilon=0$.
This method is called the Frobenius method
(see, e.g., \cite[p.22, p.145]{SST}).
We put $\tilde{\Phi}_\ve=\Phi_\ve Q$ and $\tilde{\Phi}^\vee_\ve=\Phi^\vee_\ve Q$. A simple computation shows that $\tilde{\Phi}=\tilde{\Phi}_\ve\restriction_{\varepsilon\rightarrow 0}$ and $\tilde{\Phi}^\vee=\tilde{\Phi}^\vee_\ve\restriction_{\varepsilon\rightarrow 0}$ are convergent and we have a limit formula:
\begin{equation} \label{eq:quad0}
\tilde{\Phi}
\begin{pmatrix}
4\pi^3&0&0&\pi\\
0&0&\pi&0\\
0&\pi&0&0\\
\pi&0&0&0
\end{pmatrix}
{}^t\tilde{\Phi}^\vee
=64.
\end{equation}
The components of the vector ${\tilde \Phi}$ are as follows:
\begin{equation}
{\tilde \Phi}_1=
\frac{1}{\sqrt{\pi } \sqrt{z_4}}
\left(1+\frac{3}{2 z_4^2}+O\left(\left(\frac{1}{z_4}\right)^4\right) \right),
\end{equation}
\begin{eqnarray}
{\tilde \Phi}_2
&=& {\tilde \Phi}_3 =
\frac{1}{\sqrt{\pi } \sqrt{z_4}} (\phi'_{20} + (\log z_4) \phi'_{21}),
\end{eqnarray}
\begin{eqnarray}
{\tilde \Phi}_4  
&=&
\frac{1}{\sqrt{\pi } \sqrt{z_4}}(\phi'_{40}+(\log z_4) \phi'_{41} +(\log z_4)^2 \phi'_{42}).\end{eqnarray}
\noindent
The components of the vector ${\tilde \Phi}^\vee$ are as follows:
\begin{equation}
{\tilde \Phi}_1^\vee=\frac{2 \sqrt{z_4}}{\sqrt{\pi }}\left(1-\frac{1}{2 z_4^2}+O\left(\left(\frac{1}{z_4}\right)^4\right)\right),
\end{equation}
\begin{eqnarray}
{\tilde \Phi}_2^\vee&=&{\tilde \Phi}_3^\vee=\frac{2 \sqrt{z_4}}{\sqrt{\pi }}(\phi_{20} + (\log z_4)\phi_{21}),
\end{eqnarray}
\begin{eqnarray}
{\tilde \Phi}_4^\vee&=&\frac{2 \sqrt{z_4}}{\sqrt{\pi }}(\phi_{40} + (\log z_4)\phi_{41}+(\log z_4)^2 \phi_{42}).
\end{eqnarray}
Here, $\phi_{ij}$ and $\phi'_{ij}$ are power series in $z_4^{-2}$.
We would like to note that coefficients of these power series are very complicated,
but these satisfy a simple functional identity (\ref{eq:quad0}).
\mchange{Interested readers may} refer to our Mathematica programs {\tt tperiod.m} and {\tt fancy.m}\footnote{ \url{http://www.math.kobe-u.ac.jp/OpenXM/Math/intersection}}
to obtain explicit expressions of coefficients of these series.

As for $\tilde{\Phi}_1$, it can be related to Thomae's and Gau\ss' hypergeometric series by \mchange{a} simple transformation
\begin{equation}
 \pi^{1/2} z_4^{1/2} {\tilde \Phi}_1(z_4)= {}_3 F_2\left(\substack{1/4,2/4,3/4 \\1,1}; 16/z_4^2\right) =
\left({}_2F_1\left(\substack{1/8,3/8\\ 1};16/z_4^2\right)\right)^2
\end{equation}
as was remarked in \cite{NS01}. Here, the second equality is the result of Clausen's identity.

\section*{Appendix: A lemma on a stalk of a direct image}
In this appendix, we prove the following lemma.

\begin{lem}\label{LemmaA}
Let $X,F$ be real manifolds, let $p:X\times F\rightarrow X$, \mchange{$q:X\times F\rightarrow F$} be the canonical projections, and let $\mathcal{L}$ be a $\Q$-local system on $F$. Then, for any $x\in X$ and any integer $k$, there is a canonical isomorphism 
\begin{equation}\label{StalkFormula}
\left(R^kp_*(\mchange{q^{-1}}\mathcal{L})\right)_x\simeq H^k\left( F, \mathcal{L}\right)
\end{equation}
under the natural identification $F\simeq \{ x\}\times F$.
\end{lem}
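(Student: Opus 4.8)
The plan is to reduce the statement to the fact that a local system is locally constant and that cohomology commutes with direct limits over a neighborhood basis. First I would use the general formula for the stalk of a higher direct image: for the projection $p:X\times F\to X$ and any sheaf $\mathcal{G}$ on $X\times F$, one has $\left(R^kp_*\mathcal{G}\right)_x\simeq\varinjlim_{V\ni x}H^k\left(p^{-1}(V),\mathcal{G}\right)$, where $V$ ranges over open neighborhoods of $x$ in $X$. Since $X$ is a manifold, the $V$ can be taken to be a cofinal system of open balls (in particular contractible, hence simply connected), so $p^{-1}(V)=V\times F$ with $V$ contractible.

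The key step is then to compute $H^k(V\times F,q^{-1}\mathcal{L})$ for $V$ contractible. Since $\mathcal{L}$ is a local system on $F$, $q^{-1}\mathcal{L}$ is a local system on $V\times F$; and because $V$ is contractible, the inclusion $\{x\}\times F\hookrightarrow V\times F$ is a homotopy equivalence, so the monodromy of $q^{-1}\mathcal{L}$ on $V\times F$ is pulled back from that of $\mathcal{L}$ on $F$. I would invoke either the Künneth formula for sheaf cohomology with a local system on the second factor (using $H^\bullet(V,\Q)=\Q$ concentrated in degree $0$) or, more directly, the fact that for a homotopy equivalence the pullback map on local-system cohomology is an isomorphism: $H^k(V\times F,q^{-1}\mathcal{L})\simeq H^k(\{x\}\times F,q^{-1}\mathcal{L}\restriction_{\{x\}\times F})\simeq H^k(F,\mathcal{L})$. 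Passing to the direct limit over the cofinal system of contractible $V$, the transition maps are all isomorphisms, so the limit is $H^k(F,\mathcal{L})$, giving (\ref{StalkFormula}).

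Finally I would check that the isomorphism so produced is canonical, i.e. independent of the choice of the cofinal system of neighborhoods and compatible with restriction: this follows because each transition map in the direct system is induced by an inclusion and these commute with the restriction maps to $\{x\}\times F$, so the colimit map is canonically identified with the restriction to the fiber.

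The main obstacle I expect is purely a matter of care rather than depth: one must ensure the homotopy-invariance statement is applied to sheaf cohomology with coefficients in a \emph{non-constant} local system, which requires either a Leray/Künneth argument or the observation that $Rp_*q^{-1}\mathcal{L}$ is itself a local system on $X$ whose formation is compatible with base change to a point (proper or smooth base change is not needed here since $V\times F\to V$ is just a product, and one can argue directly with the projection formula $Rp_*q^{-1}\mathcal{L}\simeq (Rp_*\Q_{X\times F})\otimes\text{(pullback data)}$ only in the constant case, so the cleanest route is the homotopy-equivalence argument on each $p^{-1}(V)$). Everything else is standard sheaf theory on manifolds.
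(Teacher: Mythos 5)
Your proof is correct and takes essentially the same route as the paper's: identify the stalk with the colimit of $H^k\bigl(p^{-1}(U),q^{-1}\mathcal{L}\bigr)$ over a cofinal system of balls and conclude via the K\"unneth formula (the paper invokes K\"unneth directly, where you also offer the equivalent homotopy-invariance argument). The extra care you take about canonicity and about K\"unneth with non-constant local-system coefficients is sound but not a departure from the paper's argument.
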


\begin{proof}
Since $R^kp_*(\mchange{q^{-1}}\mathcal{L})$ is the sheaf associated to the presheaf $U\mapsto \Homo^k\left( p^{-1}(U),\mchange{q^{-1}}\mathcal{L}\right)$, we see that the left-hand side of (\ref{StalkFormula}) is equal to $\underset{x\in U}{\varinjlim}\Homo^k\left( p^{-1}(U),\mchange{q^{-1}}\mathcal{L}\right)$. Since $X$ is a real manifold, we can find a fundamental system of neighborhoods of $x$ consisting of balls in an Euclidian space. When $U$ is a ball, by K\"unneth formula, we have a sequence of isomorphisms $\Homo^k\left( p^{-1}(U),\mchange{q^{-1}}\mathcal{L}\right)\simeq \Homo^0\left( U,\Q\right)\otimes_\Q\Homo^k\left( F,\mathcal{L}\right)\simeq\Homo^k\left( F,\mathcal{L}\right).$
\end{proof}


\begin{thebibliography}{99}
\bibitem{Adolphson}
A.Adolphson, Hypergeometric functions and rings generated by monomials,
Duke Math. J. {\bf 73} (1994), 269--290.
\bibitem{AET}K.Ando, A.Esterov, K.Takeuchi, Monodromies at infinity of confluent A-hypergeometric functions, Adv. Math. 272 (2015), 1--19.
\bibitem{Aomoto-Kita}
K.Aomoto and M.Kita,
Theory of hypergeometric functions,
(in Japanese) Springer-Tokyo, 1994 (English translation available from Springer-Verlag, Tokyo, 2011)
\bibitem{B2010}
F.Beukers,
Irreducibility of A-hypergeometric systems,
Indagationes Mathematicae 21 (2011), 30--39. 
\bibitem{BJ}
F.Beukers, F.Jouhet, Duality relations for hypergeometric series,
Bull. Lond. Math. Soc. 47 (2015), no. 2, 343–358.
\bibitem{CM}
K.Cho and K.Matsumoto,
Intersection theory for twisted cohomologies
and twisted Riemann's period relations I,
Nagoya Mathematical Journal, {\bf 139}, (1995), 67--86.
\bibitem{Deligne}
P.Deligne,
\'{E}quations diff\'{e}rentielles \`a points singuliers r\'{e}guliers,
Lecture Notes in Mathematics, Vol. 163,
Springer-Verlag, Berlin-New York, 1970.
\bibitem{DMSS}
A.Dimca, F.Maaref, C.Sabbah, M.Saito,
Dwork cohomology and algebraic $\mathcal{D}$-modules,
Math. Ann., {\bf 318}, (2000) Issue 1, 107–-125.
\bibitem{GKZToral}
I.M.Gel'fand, M.M.Kapranov, and A.V.Zelevinsky,
Hypergeometric functions and toric varieties. (Russian) Funktsional. Anal. i Prilozhen. 23 (1989), no. 2, 12–26; translation in Funct. Anal. Appl. 23 (1989), no. 2, 94–106
\bibitem{GKZEuler}
I.M.Gel'fand, M.M.Kapranov, and A.V.Zelevinsky,
Generalized Euler integrals and $A$-hypergeometric functions,
Adv. Math. {\bf 84} (1990), 255--271.
\bibitem{GKZbook}
I.M.Gel'fand, M.M.Kapranov, and A.V.Zelevinsky,
Discriminants, Resultants and Multidimensional Determinants,
Birkhauser, 1994.
\bibitem{Goto-Matsumoto} 
Y.Goto, K.Matsumoto,
Pfaffian of Appell's hypergeometric system $F_4$ in terms of the intersection
form of twisted cohomology groups, Nagoya Mathematical Journal (2015), 61--94.
\bibitem{Goto-Matsumoto-2018} 
Y.Goto, , K.Matsumoto,
Pfaffian equations and contiguity relations of the hypergeometric function of type $(k+1, k+n+2)$ and Their Applications,
Funkcialaj Ekvacioj {\bf 61} (2018), 315--347.
\bibitem{dojo-en}
Edited by T.Hibi,
Gr\"obner bases: statistics and software systems,
2013, Springer.
\bibitem{HibiTakayamaNishiyama}
T.Hibi, K.Nishiyama, N.Takayama, Pfaffian systems of $A$-hypergeometric equations I: Bases of twisted cohomology groups,
Adv. Math. {\bf 306} (2017), 303--327.
\bibitem{Hotta-Wuhan} 
R.Hotta,
Equivariant $D$-modules,
{\rm math.RT/9805021}.
\bibitem{Hotta-Takeuchi-Tanisaki}
R.Hotta, K.Takeuchi, T.Tanisaki,
$D$-modules, perverse sheaves, and representation theory,
Progress in Mathematics {\bf 236},
Birkh\"auser, 2008.
\bibitem{KY}
M.Kita, M.Yoshida, Intersection theory for twisted cycles, Math. Nachr. 166 (1994), 287-304.
\bibitem{MH} 
S.J.Matsubara-Heo,
Euler and Laplace integral representations of GKZ hypergeometric functions, 	arXiv:1904.00565
\mchange{
\bibitem{ICMS}
S.J.Matsubara-Heo, N.Takayama,
Algorithms for Pfaffian Systems and
Cohomology Intersection Numbers of
Hypergeometric Integrals,
Mathematical software—ICMS 2020. Lecture Notes in Computer Science, 12097. Springer, 73--84, (Errata at \url{http://www.math.kobe-u.ac.jp/OpenXM/Math/intersection2})
\bibitem{MTGKZ}
S.J.Matsubara-Heo, N.Takayama,
GKZ hypergeometric system (manual for mt\_gkz.rr),
\url{http://www.math.kobe-u.ac.jp/OpenXM/Math/intersection2/Prog/mt_gkz-en.pdf}
}
\bibitem{Matsumoto-osaka}
K.Matsumoto, 
Intersection numbers for logarithmic $k$-forms,
Osaka J. Math. {\bf 35} (1998), 873--893.
\bibitem{NS01}
N.Narumiya and H.Shiga, The mirror map for a family of K3 surfaces induced 
from the simplest $3$-dimensional reflexive polytope,
In Proceedings on Moon-shine and related topics (Montr\'eal, QC, 1999),
volume 30 of CRM Proc. Lecture Notes, 139--161, Amer. Math. Soc., 
Providence, RI, 2001.
\bibitem{Oaku-Takayama-Tsai}
T.Oaku, N.Takayama, H.Tsai,
Polynomial and rational solutions of holonomic systems.
J. Pure and Applied Alg (2001), 199--220. 
\bibitem{Ohara-Sugiki-Takayama}
K.Ohara, Y.Sugiki, N.Takayama,
Quadratic relations for generalized hypergeometric
functions ${}_p F _{p-1}$,
Funkcialaj Ekvacioj {\bf 46} (2003), 213--251.
\bibitem{SST}
M.Saito, B.Sturmfels, N.Takayama,
Gr\"obner deformations of hypergeometric differential equations,
Springer, 2000.
\bibitem{SW2010}
M.Schulze, U.Walther,
Resonance equals reducibility for A-hypergeometric systems.
Algebra \& Number Theory 6 (2012), 527--537.
\bibitem{Serre} J.-P. Serre, G\'eom\'etrie alg\'ebrique et g\'eom\'etrie analytique, 
Ann. Inst. Fourier, Grenoble 6 (1955--1956), 1--42.
\bibitem{Tibar}M.Tib\u{a}r, Polynomials and vanishing cycles.
Cambridge Tracts in Mathematics, 170. Cambridge University Press, Cambridge, 2007. xii+253 pp.
\bibitem{Verdier}J.-L.Verdier,  Stratifications de Whitney et th\'eor\`eme de Bertini-Sard, Invent. Math. 36 (1976), 295–-312.
\end{thebibliography}

\end{document}